\documentclass{amsart}

\usepackage{amssymb,color}
\usepackage{amsfonts}
\usepackage{amsmath}
\usepackage{euscript}
\usepackage{enumerate}
\usepackage{pdfsync}
\newtheorem{theorem}{Theorem}[section]
\newtheorem{lemma}[theorem]{Lemma}
\newtheorem{note}[theorem]{Note}
\newtheorem{prop}[theorem]{Proposition}
\newtheorem{cor}[theorem]{Corollary}

\newtheorem*{Theorem1'}{Theorem 1'}

\theoremstyle{definition}

\theoremstyle{remark}

\newcommand \Z{{\mathbb Z}}
\newcommand \Q{{\mathbb Q}}
\newcommand \N{{\mathbb N}}

\begin{document}

\title[A family of groups extending McLain's]{A family of groups extending McLain's}

\author{Leandro Cagliero}
\address{CIEM-CONICET, FAMAF-Universidad Nacional de C\'ordoba, Argentina}
\email{cagliero@famaf.unc.edu.ar}
\thanks{The first author was partially supported by grants from CONICET, FONCYT and SeCyT-UNC\'ordoba.}


\author{Fernando Szechtman}
\address{Department of Mathematics and Statistics, University of Regina, Canada}
\email{fernando.szechtman@gmail.com}
\thanks{The second author was partially supported by NSERC grant 2020-04062}

\subjclass[2020]{20F19, 20H25, 20E25}

\keywords{McLain group, locally nilpotent group}

\begin{abstract} Given a strict partial order $\Delta$ on a set $\Lambda$ and an arbitrary ring $R$ with $1\neq 0$,
the corresponding McLain group $M(\Delta)$ has been studied in depth. We construct a larger family of 
McLain groups $G(\Delta)$, where $\Delta$ is neither
asymmetric nor transitive, while satisfying two weaker axioms.
Structural properties common to all members~$G(\Delta)$ of this new family are investigated, including
a group presentation, a description of the factors of its descending central series,
a canonical form for its elements relative to any total order on~$\Delta$, and a recursive 
determination of its upper central series. In addition, we prove the natural isomorphism
$G(\Delta)/G(\Gamma)\cong G(\Delta\setminus\Gamma)$,
where $\Gamma$ is a normal subset $\Gamma$ of $\Delta$, and $G(\Gamma)$ and $G(\Delta\setminus\Gamma)$
are extended McLain groups on their own right. This result has no parallel in the classical context.
\end{abstract}

\maketitle

\section{Introduction}

In a two page long note published in 1954, McLain \cite{M} constructed an infinite characteristically simple 
locally finite $p$-group $M(F_p,<)$,
where $p$ a prime and $<$ is the usual order on $\Q$, showing that $M(F_p,<)$ 
satisfies several interesting properties at the same time, making his paper a widely cited source ever since.

It eventually become clear that McLain's construction (albeit not his conclusions) 
went through in ample generality, thus producing a McLain group $M(R,\Delta)$ associated to 
any ring $R$ with $1\neq 0$ and any set $\Lambda$ endowed with a strict partial order $\Delta$.
Droste and G\"{o}bel  \cite{DG} studied $M(R,\Delta)$ in this context. Under 
suitable conditions, they: show that $R$ and $\Delta$ can be recovered from $M(R,\Delta)$;  
determine the automorphism group of $M(R,\Delta)$, extending prior work by Roseblade~\cite{Ro}; 
analyze properties of $M(R,\Delta)$ that ultimately depend on $R$ and $\Delta$ (e.g. being characteristically
simple); and use  $M(R,\Delta)$ to produce interesting applications, one of which supersedes
previous examples given by Wilson \cite{W}.

Further group theoretical properties of McLain's groups were studied, among others, by Berrick and Downey~\cite{BD},
Hołubowski \cite{H}, Cutolo and Smith \cite{CS},
and Arikan \cite{Ar}.

In a totally different direction, Szechtman, Herman, and Izadi \cite{SHI} study 
the representation theory of $M(R,\Delta)$ in arbitrary characteristic,
extending to this scenario features of the unitriangular group $U_n(q)$, $q=p^m$, developed by Andr\'e \cite{A, A2, A3} over
the complex numbers. Under mild assumptions, \cite{SHI} constructs possibly infinite dimensional irreducible elementary
modules, and proceeds to investigate certain $\Delta$-allowed tensor products of them called basic. It turns out
that all basic modules are monomial, distinct basic modules are disjoint, and nested basic modules are themselves irreducible. Moreover, exact conditions for a basic module to be irreducible  are given, together with
a description of how a basic module decomposes as the direct sum of irreducible modules, including a criterion
for this decomposition to be multiplicity free.

In the present paper, we construct and study a further extension of McLain groups. To build these groups
we require, as above, an arbitrary ring $R$ with $1\neq 0$, a set $\Lambda$, and a relation $\Delta$
on $\Lambda$. However, we subject $\Delta$ to two weak axioms which
are automatically satisfied by every strict partial order and not the other way around.
Associated to this data we produce a McLain group $G(\Delta)$ and proceed to investigate some its structural features.
We first prove the natural isomorphism 
$$
G(\Delta)/G(\Gamma)\cong G(\Delta\setminus\Gamma),
$$
where $\Gamma$ is a normal subset $\Gamma$ of $\Delta$, and $G(\Gamma)$ and $G(\Delta\setminus\Gamma)$
are extended McLain groups on their own right. There is no analogue for result in the classical context,
as $\Delta\setminus\Gamma$ fails in general to be transitive when $\Delta$ is a strict partial order
and $\Gamma$ is normal in $\Delta$. We then we give a group presentation for $G(\Delta)$, a description of the factors of its descending central series,
a canonical form for its elements with respect
to any total order on $\Delta$, and we recursively determine the upper central series of~$G(\Delta)$.

A simple example of a family of groups $G(\Delta)$ that do not exist in the classical sense is the following.
Given $n\geq 4$, take $\Lambda_n=\Z_n$ and
$$
\Delta_n=\{(i,i+1)\,|\, i\in\Z_n\}\cup \{(i,i+2)\,|\, i\in\Z_n\}.
$$
Thus $\Delta_n$ is an oriented $n$-gon with an oriented diagonal between vertices that are two steps apart.
Axioms (A1)-(A2) are readily verified, and it is clear that $\Delta_n$ is not even remotely close to being a strict partial order.
A noteworthy property of $G(\Delta_n)$ is that a typical factorization property of the classical McLain groups completely fails
in this case. See Note \ref{eje} below for details.

\section{Definition of the extended McLain groups}

We henceforth fix an arbitrary ring $R$ with $1\neq 0$,
a set $\Lambda$ and a subset $\Delta$ of $\Lambda\times \Lambda$ satisfying the following axioms:

\medskip

(A1) $\Delta$ is irreflexive, that is, $\Delta\cap 1_\Lambda=\emptyset$.

\medskip

(A2) If $(i,j),(j,k), (k,\ell), (i,\ell)\in\Delta$, then $(i,k)\in\Delta\Leftrightarrow (j,\ell)\in\Delta$.

\medskip

Let $R\Delta$ be a free $R$-module with basis $e_{i,j}$, where $(i,j)\in\Delta$. We make $R\Delta$ into a ring by declaring
$$
e_{i,j}e_{k,\ell}=\begin{cases} e_{i,\ell} & \text{ if }j=k\text{ and }(i,\ell)\in\Delta,\\ 0 & \text{ otherwise.}
\end{cases}
$$

\begin{lemma}\label{distinct} Suppose $n\geq 3$ and $i_1,\dots,i_n\in\Lambda$ satisfy:

\noindent $\bullet$ $(i_1,i_k)\in\Delta$ for all $1<k\leq n$,

\noindent $\bullet$ $(i_k,i_{k+1})\in\Delta$ for all $1\leq k<n$.

Then $(i_k,i_{j})\in\Delta$ for all $1\leq k<j\leq n$. In particular, $i_1,\dots,i_n$ are all distinct.
\end{lemma}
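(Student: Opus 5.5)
Induction on $n$ will carry the whole argument, and the only input will be axiom (A2), applied to the quadruple $(i,j,k,\ell)=(i_1,i_{k},i_{k+1},i_{j})$.

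\emph{Base case $n=3$.} The hypotheses give $(i_1,i_2),(i_1,i_3),(i_2,i_3)\in\Delta$. These are exactly the three pairs $(i_k,i_j)$ with $1\le k<j\le 3$, so there is nothing to prove.

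\emph{Inductive step.} Assume the statement for $n-1\ge 3$, and let $i_1,\dots,i_n$ satisfy the hypotheses. Then $i_1,\dots,i_{n-1}$ satisfy them as well, so $(i_k,i_j)\in\Delta$ for all $1\le k<j\le n-1$. It remains to show $(i_k,i_n)\in\Delta$ for $1\le k<n$.
\begin{itemize}
\item For $k=1$ this is a hypothesis, and for $k=n-1$ it is also a hypothesis.
\item For the remaining $k$ I will do a downward induction on $k$, from $k=n-2$ to $k=2$. Suppose $(i_{k+1},i_n)\in\Delta$ is already known. Apply (A2) to the four pairs
\[
(i_1,i_k),\quad (i_k,i_{k+1}),\quad (i_{k+1},i_n),\quad (i_1,i_n).
\]
All four lie in $\Delta$: the first and last by hypothesis, the second by hypothesis, and the third by the downward induction.
\item (A2) then gives $(i_1,i_{k+1})\in\Delta\Leftrightarrow(i_k,i_n)\in\Delta$. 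The left side holds by hypothesis, so $(i_k,i_n)\in\Delta$.
\end{itemize}
Starting from $k+1=n-1$, this downward induction covers every $k$. So all pairs $(i_k,i_j)$ with $k<j$ lie in $\Delta$.

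\emph{Distinctness.} For $k<j$ we have $(i_k,i_j)\in\Delta$, and $\Delta$ is irreflexive by (A1). Hence $i_k\ne i_j$.

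The only delicate point is the bookkeeping. A naive choice of quadruple would call on pairs not yet known to lie in $\Delta$. Anchoring both outer pairs of the quadruple at the base point $i_1$, as above, avoids this. Note that (A2) makes no assumption that the four points are distinct, so no separate degenerate cases arise.
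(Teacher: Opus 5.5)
Your proof is correct and takes the same approach as the paper, which only says the lemma follows by induction on $n$ from (A1) and (A2). Your downward induction, applying (A2) to $(i_1,i_k),(i_k,i_{k+1}),(i_{k+1},i_n),(i_1,i_n)$ with $(i_1,i_{k+1})\in\Delta$ as the known side, supplies the details the paper leaves implicit, and (A1) then gives distinctness.
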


\begin{proof} This follows easily by induction on $n$ by means of (A1) and (A2).
\end{proof}

\begin{lemma}\label{nil} Every element of $R\Delta$ is nilpotent.
\end{lemma}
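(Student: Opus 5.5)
The plan is to reduce to a finite set of vertices and use Lemma~\ref{distinct} to bound the length of any nonzero product of basis elements. Let $x\in R\Delta$. Then $x=\sum_{(i,j)\in S} r_{i,j}e_{i,j}$ for some finite subset $S\subseteq\Delta$. Let $V\subseteq\Lambda$ be the finite set of all indices appearing as a coordinate of some pair in $S$, and put $m=|V|$. I will show that $x^n=0$ for all $n\geq m$.

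To avoid relying on associativity (which may not yet be available at this point), I define powers left-normed: $x^1=x$ and $x^{n}=x^{n-1}x$. Expanding by bilinearity, $x^n$ is an $R$-linear combination of left-normed products
$$
(\cdots((e_{a_1,b_1}e_{a_2,b_2})e_{a_3,b_3})\cdots)e_{a_n,b_n},\qquad (a_k,b_k)\in S.
$$
\emph{First step:} an induction on $n$ using the multiplication rule. The claim is that such a product is nonzero only if $b_k=a_{k+1}$ for all $k<n$ and $(a_1,b_k)\in\Delta$ for all $k$. In that case the product equals $e_{a_1,b_n}$. At each stage the running product is either $0$ or $e_{a_1,b_{k}}$. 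Multiplying it on the right by $e_{a_{k+1},b_{k+1}}$ is nonzero exactly when $b_k=a_{k+1}$ and $(a_1,b_{k+1})\in\Delta$.

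\emph{Second step:} suppose such a product is nonzero. Set $i_1=a_1$ and $i_{k+1}=b_k$ for $1\le k\le n$. This gives $n+1$ elements of $V$ satisfying:
\begin{itemize}
\item $(i_k,i_{k+1})=(a_k,b_k)\in\Delta$ for $1\le k\le n$;
\item $(i_1,i_k)\in\Delta$ for all $1<k\le n+1$.
\end{itemize}
If $n\geq 2$, then $n+1\geq 3$, so Lemma~\ref{distinct} applies and $i_1,\dots,i_{n+1}$ are pairwise distinct. Since they all lie in $V$, this forces $n+1\le m$. Hence for $n\ge m$ (and $n\geq 2$) every term in the expansion of $x^n$ vanishes, so $x^n=0$.

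The degenerate cases are easy. If $m\le 1$ then $S=\emptyset$ by (A1), so $x=0$. If $m=2$ and $n=2$, the distinctness argument still applies since $n+1=3$. Taking $n=\max(m,2)$ then gives $x^n=0$ in all cases.

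The only delicate point is the first step: correctly describing when a left-normed product of basis elements is nonzero, so that the hypotheses of Lemma~\ref{distinct} (the chain condition together with all pairs $(i_1,i_k)$ lying in $\Delta$) are exactly what survives. Once that is in place, the rest is counting.
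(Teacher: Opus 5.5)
Your proof is correct and is exactly the argument the paper compresses into ``follows immediately from Lemma~\ref{distinct}'': a nonzero product of $n\geq 2$ basis elements $e_{a_k,b_k}$ forces a chain of $n+1$ points of the finite set $V$ satisfying that lemma's hypotheses, hence pairwise distinct, so $x^{\max(m,2)}=0$. Your left-normed convention is harmless but unnecessary, since (A2) is precisely what makes $(e_{i,j}e_{j,k})e_{k,\ell}=e_{i,j}(e_{j,k}e_{k,\ell})$, so $R\Delta$ is associative.
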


\begin{proof} This follows immediately from Lemma \ref{distinct}.
\end{proof}

We adjoin 1 to $R\Delta$ and produce the ring $R\cdot 1\oplus R\Delta$, whose unit group contains the extended McLain group
associated to $\Delta$, namely
$$
G(\Delta)=\{1+x\,|\, x\in R\Delta\}.
$$

For $(i,j)\in\Delta$ and $a\in R$, we set 
$$
x_{i,j}(a)=1+ae_{i,j},
$$
and write $[u,v]=uvu^{-1}v^{-1}$ for all $u,v$ belonging to a given group $H$. Then 
\begin{equation}\label{rel1}
[x_{i,j}(a),x_{j,k}(b)]
=\begin{cases} x_{i,k}(ab) & \text{ if }(i,k)\in\Delta,\\ 1 & \text{ otherwise,}
\end{cases}
\end{equation}
\begin{equation}\label{rel2}
[x_{i,j}(a),x_{k,\ell}(b)]
=1\text{ if }j\neq k\text{ and }i\neq\ell,
\end{equation}
\begin{equation}\label{rel3}
x_{i,j}(a)x_{i,j}(b)
=x_{i,j}(a+b).
\end{equation}

Given a subset $\Gamma$ of $\Delta$, we write $R\Gamma$ for the $R$-span of all $e_{i,j}$, $(i,j)\in\Gamma$, 
and define
$$
G(\Gamma)=\{1+x\,|\, x\in R\Gamma\},
$$
which is a subgroup of $G(\Delta)$ if and only if $\Gamma$ is closed, in the sense that
$$
(i,j),(j,k)\in\Gamma, (i,k)\in\Delta\Rightarrow (i,k)\in\Gamma.
$$

If $\Gamma$ is closed, then $G(\Gamma)$ is the extended McLain group associated to $\Gamma$, as seen below.

\begin{lemma}\label{closed} Suppose $\Gamma$ is a closed subset of $\Delta$.  Then $\Gamma$ satisfies (A1)-(A2).
\end{lemma}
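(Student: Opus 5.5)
We must show that a closed subset $\Gamma\subseteq\Delta$ satisfies (A1) and (A2), where both axioms are now read with $\Gamma$ in place of $\Delta$. The plan is to check the two axioms one at a time, using closedness only to push membership from $\Delta$ back into $\Gamma$.

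For (A1), nothing needs to be done. Since $\Gamma\subseteq\Delta$ and $\Delta\cap 1_\Lambda=\emptyset$, we get $\Gamma\cap 1_\Lambda=\emptyset$.

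For (A2), suppose $(i,j),(j,k),(k,\ell),(i,\ell)\in\Gamma$; we must show $(i,k)\in\Gamma\Leftrightarrow(j,\ell)\in\Gamma$.

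\emph{Forward direction.} Assume $(i,k)\in\Gamma$. Then $(i,k)\in\Delta$, and all four given pairs lie in $\Delta$, so (A2) for $\Delta$ gives $(j,\ell)\in\Delta$. Now $(j,k),(k,\ell)\in\Gamma$ and $(j,\ell)\in\Delta$, so closedness of $\Gamma$ yields $(j,\ell)\in\Gamma$.

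\emph{Reverse direction.} This is symmetric. Assume $(j,\ell)\in\Gamma$. Then (A2) for $\Delta$ gives $(i,k)\in\Delta$. Since $(i,j),(j,k)\in\Gamma$ and $(i,k)\in\Delta$, closedness gives $(i,k)\in\Gamma$.

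There is no real obstacle here. The only point needing care is that closedness is defined relative to $\Delta$: its conclusion requires knowing that the composite pair lies in $\Delta$. That is exactly what (A2) for $\Delta$ supplies, so the key step is applying (A2) in $\Delta$ before invoking closedness.
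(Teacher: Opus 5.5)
Your proof is correct and is exactly the routine verification the paper leaves to the reader. (A1) is inherited from $\Delta$. For each direction of (A2), you first apply (A2) in $\Delta$ to get the missing pair into $\Delta$, then use closedness of $\Gamma$ to bring it into $\Gamma$.
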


\begin{proof} This is a routine verification.
\end{proof}

Let $\Gamma$ be a subset of $\Delta$. We say that $\Gamma$ is normal if
$$
(i,j)\in\Gamma, (j,k),(i,k)\in\Delta\Rightarrow (i,k)\in\Gamma,
$$
and
$$
(i,j)\in\Gamma, (k,i),(k,j)\in\Delta\Rightarrow (k,j)\in\Gamma,
$$
in which case $\Gamma$ is obviously closed; moreover, $G(\Gamma)$ is a normal subgroup of $G(\Delta)$
and $\Delta\setminus\Gamma$ satisfies (A1)-(A2), as seen below.

\section{An isomorphism between $G(\Delta)/G(\Gamma)$ and $G(\Delta\setminus \Gamma)$}

Associated to every element $x$ of $R\Delta$ there exists a unique finite subset $\Omega(x)$ of $\Delta$
and unique $0\neq r_{i,j}\in R$ for each $(i,j)\in\Omega(x)$ such that
\begin{equation}\label{normalf}
x=\underset{(i,j)\in\Omega(x)}\sum r_{i,j} e_{i,j}.
\end{equation}

Given subsets $\Gamma_1,\Gamma_2$ of $\Delta$, we define $[\Gamma_1,\Gamma_2]$ to be the set 
of all $(i,k)\in\Delta$ such that there is $j\in\Lambda$ such that $(i,j)\in\Gamma_1$ and
$(j,k)\in\Gamma_2$, or there is $j\in\Lambda$ such that $(i,j)\in\Gamma_2$ and
$(j,k)\in\Gamma_1$. 

For a subset $\Gamma$ of $\Delta$, we let $\gamma_1(\Gamma)=\Gamma$, $\gamma_2(\Gamma)=[\Gamma,\Gamma]$,
$\gamma_3(\Gamma)=[\gamma_2(\Gamma),\Gamma]$, etc. Given a group~$H$, we write $\gamma_1(H)=H$, $\gamma_2(H)=[H,H]$,
$\gamma_3(H)=[\gamma_2(H),H]$, etc. Regarding these groups, we will repeatedly and implicitly use generating
properties of them, as described in \cite[Proposition 5.1.7]{R}.

\begin{lemma}\label{nilp} Suppose $\Gamma$ is a finite closed subset of $\Delta$, with $|\Gamma|=n\in\N$. Then
$\gamma_n(\Gamma)=\emptyset$.
\end{lemma}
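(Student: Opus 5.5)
The plan is to attach to every pair in $\gamma_m(\Gamma)$ a chain of distinct points whose pairs all lie in $\Gamma$, and then count. I prove by induction on $m\ge 1$ the following claim: for every $(i,k)\in\gamma_m(\Gamma)$ there are $i=i_1,i_2,\dots,i_{m+1}=k$ in $\Lambda$ with $(i_a,i_b)\in\Gamma$ for all $1\le a<b\le m+1$. The case $m=1$ is trivial. Granting the claim, Lemma \ref{distinct} shows that $i_1,\dots,i_{m+1}$ are distinct. Hence the $m$ pairs $(i_a,i_{a+1})$, $1\le a\le m$, together with $(i_1,i_{m+1})$, are $m+1$ distinct elements of $\Gamma$ when $m\ge 2$. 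So if $n\ge 2$ and $\gamma_n(\Gamma)\neq\emptyset$, then $|\Gamma|\ge n+1$, a contradiction. For $n=1$ the literal statement would say $\Gamma=\gamma_1(\Gamma)=\emptyset$, which is false, so I would read the lemma for $n\ge2$ (for $n=1$ one still gets $\gamma_2(\Gamma)=\emptyset$ by the same count).

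For the inductive step, let $(i,k)\in\gamma_{m+1}(\Gamma)=[\gamma_m(\Gamma),\Gamma]$; there are two cases. In the first case, $(i,j)\in\gamma_m(\Gamma)$ and $(j,k)\in\Gamma$. Take the chain $i=i_1,\dots,i_{m+1}=j$ given by induction and append $k$. The hypotheses of Lemma \ref{distinct} hold for $i_1,\dots,i_{m+1},k$: the pairs $(i_1,i_t)$ lie in $\Gamma$, the pair $(i_1,k)=(i,k)$ lies in $\Delta$ by definition of $[\cdot,\cdot]$, and consecutive pairs lie in $\Gamma$. So every $(i_a,k)\in\Delta$. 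Since $(i_a,j)\in\Gamma$ and $(j,k)\in\Gamma$, closedness of $\Gamma$ gives $(i_a,k)\in\Gamma$.

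The second case is $(i,j)\in\Gamma$ and $(j,k)\in\gamma_m(\Gamma)$. Here we prepend $i$ to a chain $j=i_1,\dots,i_{m+1}=k$, and this is the main obstacle. Lemma \ref{distinct} as stated requires $(i,i_t)\in\Delta$ for all $t$, which is exactly what we want to prove. To get around this, I use the dual form of Lemma \ref{distinct}: if $(i_k,i_n)\in\Delta$ for all $k<n$ and consecutive pairs are in $\Delta$, then $(i_k,i_j)\in\Delta$ for all $k<j$. This holds because (A1) and (A2) are invariant under reversing $\Delta$, so the lemma applies to the opposite relation.

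Apply the dual lemma to the sequence $i,i_1,\dots,i_{m+1}$. Its last point is $k$: we have $(i,k)\in\Delta$, and $(i_a,k)\in\Gamma$ by induction. Consecutive pairs lie in $\Gamma$. The dual lemma therefore gives $(i,i_t)\in\Delta$ for all $t$. Closedness applied to $(i,j),(j,i_t)\in\Gamma$ then yields $(i,i_t)\in\Gamma$, which completes the induction.
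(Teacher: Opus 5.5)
Your proof is correct and is essentially the argument the paper intends. The paper only says the lemma is an immediate consequence of Lemma \ref{distinct}; your induction supplies the omitted details, namely building chains with all pairs in $\Gamma$ via closedness, and handling the second bracket case with the reversed-relation form of Lemma \ref{distinct}. You are also right that the statement is literally false for $n=1$ (since $\gamma_1(\Gamma)=\Gamma$), so it must be read for $n\ge 2$.
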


\begin{proof} This is an immediate consequence of Lemma \ref{distinct}.
\end{proof}

\begin{lemma}\label{nor} Let $\Gamma_1,\Gamma_2$ be normal subsets of $\Delta$. Then $[\Gamma_1,\Gamma_2]$ 
is a normal subset of $\Delta$ contained in $\Gamma_1\cap\Gamma_2$. In particular, if $\Gamma$ is a closed subset
of $\Delta$, then all $\gamma_k(\Gamma)$ are normal subsets of $\Gamma$.
\end{lemma}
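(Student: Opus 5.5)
The plan is to check the two normality conditions for $[\Gamma_1,\Gamma_2]$ directly, using (A2) as the main tool, and to prove the containment in $\Gamma_1\cap\Gamma_2$ first, since it is immediate.

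\textbf{Containment.} Take $(i,k)\in[\Gamma_1,\Gamma_2]$. Say there is $j$ with $(i,j)\in\Gamma_1$ and $(j,k)\in\Gamma_2$, and $(i,k)\in\Delta$ by definition.
\begin{itemize}
\item The first normality condition for $\Gamma_1$, applied to $(i,j)\in\Gamma_1$ and $(j,k),(i,k)\in\Delta$, gives $(i,k)\in\Gamma_1$.
\item The second normality condition for $\Gamma_2$, applied to $(j,k)\in\Gamma_2$ and $(i,j),(i,k)\in\Delta$, gives $(i,k)\in\Gamma_2$.
\end{itemize}
The case $(i,j)\in\Gamma_2$, $(j,k)\in\Gamma_1$ is symmetric.

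\textbf{First normality condition.} Let $(i,k)\in[\Gamma_1,\Gamma_2]$ and $(k,\ell),(i,\ell)\in\Delta$. Choose $j$ with, say, $(i,j)\in\Gamma_1$ and $(j,k)\in\Gamma_2$. We now have the four pairs $(i,j),(j,k),(k,\ell),(i,\ell)$ in $\Delta$, with $(i,k)\in\Delta$. Hence (A2) gives $(j,\ell)\in\Delta$. Normality of $\Gamma_2$, applied to $(j,k)\in\Gamma_2$ with $(k,\ell),(j,\ell)\in\Delta$, yields $(j,\ell)\in\Gamma_2$. Then $(i,j)\in\Gamma_1$, $(j,\ell)\in\Gamma_2$ and $(i,\ell)\in\Delta$, so $(i,\ell)\in[\Gamma_1,\Gamma_2]$. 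The other ordering of $\Gamma_1,\Gamma_2$ is identical with the roles swapped.

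\textbf{Second normality condition.} Let $(i,k)\in[\Gamma_1,\Gamma_2]$ and $(h,i),(h,k)\in\Delta$, with $j$ as before. We have $(h,i),(i,j),(j,k),(h,k)\in\Delta$ and $(i,k)\in\Delta$. By (A2) (the converse direction), $(h,j)\in\Delta$. Normality of $\Gamma_1$, applied to $(i,j)\in\Gamma_1$ with $(h,i),(h,j)\in\Delta$, gives $(h,j)\in\Gamma_1$. Together with $(j,k)\in\Gamma_2$ and $(h,k)\in\Delta$, this shows $(h,k)\in[\Gamma_1,\Gamma_2]$.

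\textbf{Consequence for $\gamma_k(\Gamma)$.} Here normality is relative to $\Gamma$, which satisfies (A1)-(A2) by Lemma~\ref{closed}. The set $\Gamma$ is trivially normal in itself. By induction, $\gamma_{k+1}(\Gamma)=[\gamma_k(\Gamma),\Gamma]$ is normal in $\Gamma$, applying the first part within $\Gamma$. The definition of $[\,\cdot\,,\,\cdot\,]$ computed inside $\Gamma$ agrees with that computed in $\Delta$ intersected with $\Gamma$, and by closedness any product of elements of $\Gamma$ lying in $\Delta$ already lies in $\Gamma$.

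The only delicate point, which I expect to be the main obstacle, is ensuring that each application of (A2) has all four required pairs in $\Delta$, with the hypothesis pair $(i,k)$ or its counterpart in $\Delta$. This holds because $(i,k)\in\Delta$ is part of membership in $[\Gamma_1,\Gamma_2]$.
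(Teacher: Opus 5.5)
Your proof is correct and follows the paper's approach: check the containment in $\Gamma_1\cap\Gamma_2$ and both normality conditions directly via (A2), then apply the result inside $\Gamma$ using Lemma~\ref{closed}. You simply supply the details the paper dismisses as a routine verification, including the check that brackets computed in $\Gamma$ agree with those computed in $\Delta$.
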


\begin{proof} It is obvious that $[\Gamma_1,\Gamma_2]$ is contained in $\Gamma_1\cap\Gamma_2$. That 
that $[\Gamma_1,\Gamma_2]$ 
normal in $\Delta$ is a routine verification. The last assertion now follows from Lemma \ref{closed}.
\end{proof}

We write $\pi_1,\pi_2:\Lambda\times\Lambda\to\Lambda$ for the canonical projections $(i,j)\mapsto i$ and
$(i,j)\mapsto j$, respectively. Given any finite subset $\Omega$ of $\Delta$,
we set $\Phi(\Omega)=\pi_1(\Omega)\cup\pi_2(\Omega)$, noting that $(\Phi(\Omega)\times \Phi(\Omega))\cap \Delta$
is a finite closed subset of $\Delta$ containing $\Omega$, and we define 
$\Gamma(\Omega)$ as the intersection of all finite closed subsets of $\Delta$ containing $\Omega$.
In addition, we let $T(\Delta)$ stand for the subgroup of $G(\Delta)$ generated by all  
$x_{i,j}(a)$ with $a\in R$ and $(i,j)\in\Delta$.

\begin{prop}\label{product} We have $G(\Delta)=T(\Delta)$.
\end{prop}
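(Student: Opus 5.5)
The inclusion $T(\Delta)\subseteq G(\Delta)$ is clear, so I only need to show that every $g=1+x$ with $x\in R\Delta$ lies in $T(\Delta)$. Write $x$ in the form (\ref{normalf}), let $\Omega=\Omega(x)$, and put $\Gamma=\Gamma(\Omega)$. This is a finite closed subset of $\Delta$ containing $\Omega$, since $(\Phi(\Omega)\times\Phi(\Omega))\cap\Delta$ is one. So $g\in G(\Gamma)$, and it suffices to prove the following for every finite closed $\Gamma$: $G(\Gamma)$ is generated by the $x_{i,j}(a)$ with $(i,j)\in\Gamma$ and $a\in R$. By Lemma~\ref{closed}, $\Gamma$ satisfies (A1)-(A2). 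By Lemma~\ref{nor}, the sets $\gamma_k(\Gamma)$ are normal, hence closed, subsets of $\Gamma$. By Lemma~\ref{nilp}, $\gamma_n(\Gamma)=\emptyset$ where $n=|\Gamma|$.

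I argue by descending induction on $k$. The claim is that for each $k\ge1$, every element of $G(\gamma_k(\Gamma))$ is a product of elements $x_{i,j}(a)$ with $(i,j)\in\gamma_k(\Gamma)$. For $k=n$ this holds trivially, since $G(\emptyset)=1$. Now take $h=1+y\in G(\gamma_k(\Gamma))$ and write $y=\sum r_{i,j}e_{i,j}$ over pairs in $\gamma_k(\Gamma)$. Fix any ordering of the pairs in $\Omega(y)\setminus\gamma_{k+1}(\Gamma)$ and set
$$
u=\prod_{(i,j)} x_{i,j}(r_{i,j}),
$$
the product taken over those pairs. Multiplying out $u$ in $R\cdot1\oplus R\Delta$, it equals $1$ plus the sum of those $r_{i,j}e_{i,j}$ plus products of two or more basis elements $e_{i,j}$, $(i,j)\in\gamma_k(\Gamma)$. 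By the multiplication rule, a product $e_{i,j}e_{j,\ell}$ is either $0$ or $e_{i,\ell}$ with $(i,\ell)\in\Delta$. In the latter case $(i,\ell)\in\Gamma$ because $\Gamma$ is closed, and then $(i,\ell)\in[\gamma_k(\Gamma),\gamma_k(\Gamma)]\subseteq[\gamma_k(\Gamma),\Gamma]=\gamma_{k+1}(\Gamma)$.

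The step I expect to need the most care is the case of longer products. Here I would observe that $\gamma_{k+1}(\Gamma)$ is normal, so that multiplying a term $e_{i,\ell}$ with $(i,\ell)\in\gamma_{k+1}(\Gamma)$ by any $e_{\ell,m}$ with $(i,m)\in\Delta$ again lands in $\gamma_{k+1}(\Gamma)$. The same holds on the left, using the second normality condition. This shows $u\equiv h$ modulo terms supported on $\gamma_{k+1}(\Gamma)$.

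Concretely, I would show that $u^{-1}h$ lies in $G(\gamma_{k+1}(\Gamma))$. The element $u^{-1}$ is the reversed product of the $x_{i,j}(-r_{i,j})$, and $G(\gamma_k(\Gamma))$ is a group because $\gamma_k(\Gamma)$ is closed. The same expansion argument, using normality of $\gamma_{k+1}(\Gamma)$ in $\Delta$, shows that in $u^{-1}h$ all first-order terms not in $\gamma_{k+1}(\Gamma)$ cancel. Every remaining term is a product that either vanishes or is supported in $\gamma_{k+1}(\Gamma)$. The induction hypothesis then writes $u^{-1}h$ as a product of generators, hence so is $h=u\cdot(u^{-1}h)$. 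Taking $k=1$ gives $g\in T(\Gamma)\subseteq T(\Delta)$.
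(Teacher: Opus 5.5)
Your proof is correct and is essentially the paper's argument: you strip off the coefficients outside the next commutator layer by left-multiplying by a product of the $x_{i,j}(-r_{i,j})$, then induct. The only cosmetic difference is that you descend along the fixed series $\gamma_k(\Gamma)$ of one finite closed $\Gamma$, whereas the paper inducts on $|\Gamma(\Omega(x))|$ and passes to $[\Gamma,\Gamma]$ at each step. One small correction: Lemma~\ref{nor} gives normality of $\gamma_{k+1}(\Gamma)$ in $\Gamma$, not in $\Delta$, and normality in $\Delta$ can fail (e.g.\ $\gamma_2(\Gamma)=\{(1,3)\}$ for $\Gamma=\{(1,2),(2,3),(1,3)\}$ in the usual order on $\{1,2,3,4\}$). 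Since all your factors come from $\gamma_k(\Gamma)\subseteq\Gamma$, all you actually use is $[\gamma_k(\Gamma),\gamma_k(\Gamma)]\subseteq\gamma_{k+1}(\Gamma)$ and $[\gamma_{k+1}(\Gamma),\Gamma]=\gamma_{k+2}(\Gamma)\subseteq\gamma_{k+1}(\Gamma)$.
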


\begin{proof} Let $g\in G(\Delta)$. Then $g=1+x$, where $x\in R\Delta$ is as in (\ref{normalf}).
We show that $g\in T(\Delta)$ by induction
on the size of $\Gamma=\Gamma(\Omega(x))$. The result is clear if $\Gamma=\emptyset$. Suppose that $\Gamma\neq\emptyset$
and that $h\in T(\Delta)$ when $h=1+y$, $y\in R\Delta$, and $|\Gamma(\Omega(y))|<|\Gamma|$.
Since $[\Gamma,\Gamma]$ is closed by Lemma \ref{nor} and is properly included in $\Gamma$ by Lemma \ref{nilp}, 
we see that $\Omega(x)$ is not included in $[\Gamma,\Gamma]$.
Multiplying $g$ on the left by the product in any order of all $1-r_{i,j}e_{i,j}$, where $1+r_{i,j} e_{i,j}$ appears in 
(\ref{normalf}) and $(i,j)\in\Omega(x)\setminus [\Gamma,\Gamma]$, we obtain an element $h=1+y$
such that $\Omega(y)$ is included in $[\Gamma,\Gamma]$, so $\Gamma(\Omega(y))\subseteq [\Gamma,\Gamma]$.
By inductive hypothesis
$h\in T(\Delta)$, and hence $g\in T(\Delta)$.
\end{proof}

\begin{note}\label{eje} It is not true in general that given $g\in G(\Delta)$, $g=1+x$, with $x$ as in (\ref{normalf}), then
$g$ is the product of the factors $1+r_{ij}e_{ij}$ in some order, a property considered crucial in \cite{DG}.
Thus, the proofs of \cite[Lemma 1]{M} and \cite[Lemma 2.1]{DG} break down in our context.
Indeed, given $n\geq 4$, take $\Lambda_n=\Z_n$ and
$$
\Delta_n=\{(i,i+1)\,|\, i\in\Z_n\}\cup \{(i,i+2)\,|\, i\in\Z_n\}.
$$
Axioms (A1)-(A2) are readily verified.
Then $g=1+\underset{i\in\Z_n}\sum e_{i,i+1}$ is not equal to the product $h=\underset{i\in\Z_n}\Pi x_{i,i+1}(a_i)$
in any order for any choice of $a_i\in R$, as $h$ is equal to $1+x+y$, where $x=\underset{i\in\Z_n}\sum a_i e_{i,i+1}$
and $y$ is the sum of all $a_i a_{i+1} e_{i,i+2}$ such that $x_{i,i+1}(a_i)$ appears before $x_{i+1,i+2}(a_{i+1})$
in $h$.
 
Given a subset $\Omega$ of $\Delta$, we say that $(i,j)\in\Omega$ is maximal (resp. minimal) if there is no $(j,k)\in\Omega$ 
(resp. $(k,j)\in\Omega$) such that $(i,k)\in\Delta$
(resp. $(k,i)\in\Omega$). It is easy to see that if every nonempty finite subset of $\Delta$ has
a maximal (resp. minimal) element, and $g\in G(\Delta)$, $g=1+x$ with $x$ as in~(\ref{normalf}), then
$g$ is the product of the factors $1+r_{ij}e_{ij}$ in some order. Note that in the above examples
$\Omega_n=\{(i,i+1)\,|\, i\in\Z_n\}$ does not have a maximal (resp. minimal) element.
\end{note}

\begin{prop}\label{normal} Let $\Gamma$ be a normal subset of $\Delta$. Then $G(\Gamma)$
is a normal subgroup of $G(\Delta)$.
\end{prop}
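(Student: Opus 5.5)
The plan is to reduce normality of $G(\Gamma)$ to a ring-theoretic statement: $R\Gamma$ is a two-sided ideal of the ring $R\Delta$. Granting that, the result follows quickly. First, $G(\Gamma)$ is a subgroup, since $\Gamma$ is closed and, as noted before Lemma \ref{closed}, this makes $G(\Gamma)$ a subgroup of $G(\Delta)$. Next, let $g=1+x\in G(\Delta)$ and $1+y\in G(\Gamma)$, with $x\in R\Delta$ and $y\in R\Gamma$. By Lemma \ref{nil}, $x$ is nilpotent, so $g^{-1}=1+x'$ with $x'=\sum_{m\ge1}(-x)^m\in R\Delta$. Then
$$
g(1+y)g^{-1}=1+gyg^{-1}=1+(y+xy)(1+x').
$$
If $R\Gamma$ is an ideal of $R\Delta$, then $y+xy+yx'+xyx'$ lies in $R\Gamma$, and hence the conjugate lies in $G(\Gamma)$.

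To prove that $R\Gamma$ is an ideal, it suffices by bilinearity to check basis products. Take $e_{i,j}$ with $(i,j)\in\Gamma$ and $e_{k,\ell}$ with $(k,\ell)\in\Delta$.

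For the right product $e_{i,j}e_{k,\ell}$: it is nonzero only if $j=k$ and $(i,\ell)\in\Delta$, in which case it equals $e_{i,\ell}$. Here $(i,j)\in\Gamma$ and $(j,\ell),(i,\ell)\in\Delta$, so the first normality condition gives $(i,\ell)\in\Gamma$.

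For the left product $e_{k,\ell}e_{i,j}$: it is nonzero only if $\ell=i$ and $(k,j)\in\Delta$, in which case it equals $e_{k,j}$. Here $(k,i),(k,j)\in\Delta$ and $(i,j)\in\Gamma$, so the second normality condition gives $(k,j)\in\Gamma$.

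So $R\Delta\cdot R\Gamma$ and $R\Gamma\cdot R\Delta$ are both contained in $R\Gamma$.

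No step here is a genuine obstacle. The only points needing care are the existence of $g^{-1}$ inside $G(\Delta)$, which is why Lemma \ref{nil} is invoked, and matching each of the two product types with the correct half of the normality definition. An alternative route would use Proposition \ref{product} and conjugate by the generators $x_{i,j}(a)$ via (\ref{rel1})--(\ref{rel2}). However, that would still require knowing that $G(\Gamma)$ is stable under those conjugations, which is exactly the ideal computation above, so the direct ring argument is cleaner.
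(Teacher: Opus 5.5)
Your proof is correct, but it takes a different route from the paper. The paper's one-line proof is group-theoretic. By Proposition \ref{product}, applied to $\Delta$ and, via Lemma \ref{closed}, to $\Gamma$, both $G(\Delta)$ and $G(\Gamma)$ are generated by their elements $x_{i,j}(a)$. So it suffices to check that $x_{k,\ell}(b)x_{i,j}(a)x_{k,\ell}(b)^{-1}=[x_{k,\ell}(b),x_{i,j}(a)]\,x_{i,j}(a)$ lies in $G(\Gamma)$ for $(i,j)\in\Gamma$. By (\ref{rel1})--(\ref{rel2}) this commutator is trivial except in two cases. If $\ell=i$ and $(k,j)\in\Delta$, it equals $x_{k,j}(ba)$. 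If $k=j$ and $(i,\ell)\in\Delta$, it equals $x_{i,\ell}(-ab)$. The two halves of the normality definition put these indices in $\Gamma$.

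You instead show that $R\Gamma$ is a two-sided ideal of $R\Delta$ and conjugate inside the ambient ring. This needs only Lemma \ref{nil}, to get $g^{-1}\in G(\Delta)$, and bypasses Proposition \ref{product}, whose proof is an induction on finite closed hulls using Lemmas \ref{nilp} and \ref{nor}. Your ideal property is also exactly what makes the projection $R\Delta\to R(\Delta\setminus\Gamma)$ in Theorem \ref{quotient} a ring homomorphism. So your argument in effect exhibits $G(\Gamma)$ as a kernel, which is more elementary and self-contained.

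What the paper's route buys is that it uses only generators and the relations (\ref{rel1})--(\ref{rel3}). It therefore transfers verbatim to the abstractly presented group $H(\Delta)$ of Section \ref{t}, which has no ambient ring; that generator-and-relation argument is what Proposition \ref{gak} needs.

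Relatedly, your closing remark slightly mischaracterizes the alternative. Its real cost is not the stability check, which is the same combinatorics as your ideal computation. The cost is needing Proposition \ref{product} to know that $G(\Delta)$ and $G(\Gamma)$ are generated by the $X_{i,j}$.
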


\begin{proof} This follows from Proposition \ref{product} and the relations (\ref{rel1})-(\ref{rel3}).
\end{proof}

\begin{lemma}\label{axiom} Let $\Gamma$ be a normal subset of $\Delta$. Then $\Delta\setminus \Gamma$ satisfies (A1)-(A2).
\end{lemma}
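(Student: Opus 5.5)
Write $\Delta'=\Delta\setminus\Gamma$. Axiom (A1) is inherited at once, since $\Delta'\subseteq\Delta$ and $\Delta$ is irreflexive. The work is in (A2).

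So suppose $(i,j),(j,k),(k,\ell),(i,\ell)\in\Delta'$. We must show $(i,k)\in\Delta'\Leftrightarrow(j,\ell)\in\Delta'$. All four pairs lie in $\Delta$, so (A2) for $\Delta$ already gives $(i,k)\in\Delta\Leftrightarrow(j,\ell)\in\Delta$. If neither pair lies in $\Delta$, then neither lies in $\Delta'$ and we are done. It therefore remains to treat the case where both $(i,k)$ and $(j,\ell)$ lie in $\Delta$. In that case we must show $(i,k)\in\Gamma\Leftrightarrow(j,\ell)\in\Gamma$, given that $(i,j),(j,k),(k,\ell),(i,\ell)\notin\Gamma$.

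We argue by contradiction in each direction. By symmetry, consider first $(i,k)\in\Gamma$.

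\emph{Case $(i,k)\in\Gamma$.} We have $(i,k)\in\Gamma$, $(k,\ell)\in\Delta$ and $(i,\ell)\in\Delta$. The first normality condition (right multiplication) then gives $(i,\ell)\in\Gamma$. This contradicts $(i,\ell)\in\Delta'$.

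\emph{Case $(j,\ell)\in\Gamma$.} Here we use the second normality condition with $(j,\ell)\in\Gamma$ and $(i,j),(i,\ell)\in\Delta$. It gives $(i,\ell)\in\Gamma$, again a contradiction.

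Hence neither $(i,k)$ nor $(j,\ell)$ lies in $\Gamma$. Both therefore lie in $\Delta'$, and the equivalence holds trivially. In fact the argument proves slightly more: whenever the four outer pairs are in $\Delta'$, membership in $\Delta'$ of the two diagonals reduces to membership in $\Delta$.

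The only point requiring care is matching each normality axiom to the right configuration: the first axiom extends on the right using $(k,\ell)$, and the second extends on the left using $(i,j)$. I expect no real obstacle beyond this bookkeeping.
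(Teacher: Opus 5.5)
Your proof is correct and is exactly the routine verification the paper leaves to the reader. (A1) is inherited from $\Delta$, and for (A2) you use the two normality conditions to show that if either diagonal $(i,k)$ or $(j,\ell)$ lay in $\Gamma$, then $(i,\ell)$ would as well, which reduces the claim to (A2) for $\Delta$.
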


\begin{proof} This is a routine verification.
\end{proof}

\begin{theorem}\label{quotient} Let $\Gamma$ be a normal subset of $\Delta$. Then 
$
G(\Delta)/G(\Gamma)\cong G(\Delta\setminus \Gamma).
$
\end{theorem}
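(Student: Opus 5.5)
The plan is to realize the isomorphism through a ring homomorphism. Write $\Delta'=\Delta\setminus\Gamma$; by Lemma \ref{axiom}, $\Delta'$ satisfies (A1)-(A2), so $R\Delta'$ is a ring with multiplication $e_{i,j}e_{k,\ell}=e_{i,\ell}$ if $j=k$ and $(i,\ell)\in\Delta'$, and $0$ otherwise. Define the $R$-linear map $\theta:R\Delta\to R\Delta'$ by $\theta(e_{i,j})=e_{i,j}$ if $(i,j)\in\Delta'$ and $\theta(e_{i,j})=0$ if $(i,j)\in\Gamma$. Its kernel is exactly $R\Gamma$, and it is clearly surjective. We extend it unitally to $\theta:R\cdot 1\oplus R\Delta\to R\cdot 1\oplus R\Delta'$.

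The key step is to check that $\theta$ is multiplicative, which suffices on basis elements: $\theta(e_{i,j}e_{k,\ell})=\theta(e_{i,j})\theta(e_{k,\ell})$. If $j\neq k$, both sides are $0$. So take $j=k$.

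\begin{enumerate}[(a)]
\item If $(i,j)\in\Gamma$ or $(j,\ell)\in\Gamma$, the right side is $0$. The left side is $e_{i,\ell}$ only when $(i,\ell)\in\Delta$, and then normality of $\Gamma$ (the first condition if $(i,j)\in\Gamma$, the second if $(j,\ell)\in\Gamma$) forces $(i,\ell)\in\Gamma$. Hence $\theta(e_{i,\ell})=0$ and the left side vanishes too.
\item If both $(i,j),(j,\ell)\in\Delta'$, the left side is $e_{i,\ell}$ exactly when $(i,\ell)\in\Delta$ and $(i,\ell)\notin\Gamma$, that is, when $(i,\ell)\in\Delta'$. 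This agrees with the product in $R\Delta'$.
\end{enumerate}

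So $\theta$ is a surjective unital ring homomorphism whose kernel, as a map on $R\Delta$, is $R\Gamma$.

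Restricting to units, $\theta(1+x)=1+\theta(x)$ maps $G(\Delta)$ into $G(\Delta')$. It is a group homomorphism because $\theta$ is multiplicative and unital, so it also preserves inverses. It is surjective because $1+y=\theta(1+y)$ for every $y\in R\Delta'\subseteq R\Delta$, viewing $R\Delta'$ as an $R$-submodule of $R\Delta$. Its kernel consists of the $1+x$ with $\theta(x)=0$, that is, $x\in R\Gamma$, which is exactly $G(\Gamma)$. This also recovers the normality in Proposition \ref{normal}. The first isomorphism theorem then gives $G(\Delta)/G(\Gamma)\cong G(\Delta\setminus\Gamma)$.

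The only real obstacle is the multiplicativity check, specifically case (a), where normality must be used in both directions. The subtle point is that $\Delta'$ need not be closed or transitive in $\Delta$. This is harmless because the ring structure of $R\Delta'$ is defined intrinsically through membership in $\Delta'$, and that is precisely what case (b) matches.
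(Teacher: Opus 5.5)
Your proposal is correct and follows the paper's own route: the natural coefficient projection $R\Delta\to R(\Delta\setminus\Gamma)$ is a ring epimorphism with kernel $R\Gamma$. It therefore induces a group epimorphism $G(\Delta)\to G(\Delta\setminus\Gamma)$ with kernel $G(\Gamma)$. The paper asserts this in a single line, and your case analysis (normality in case (a), membership in $\Delta\setminus\Gamma$ in case (b)) correctly supplies the multiplicativity check that the paper leaves implicit.
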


\begin{proof} The natural projection $R\Delta\to R(\Delta\setminus \Gamma)$ is
a ring epimorphism, yielding a group epimorphism $G(\Delta)\to G(\Delta\setminus \Gamma)$
with kernel $G(\Gamma)$.
\end{proof}

\begin{cor} Let $\Gamma$ be a normal subset of $\Delta$. Then the subset
$
G(\Delta\setminus \Gamma)
$
of $G(\Delta)$ is a set of representatives for the cosets of $G(\Gamma)$ in $G(\Delta)$.
\end{cor}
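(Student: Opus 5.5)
We read $G(\Delta\setminus\Gamma)$ as the subset $\{1+y \mid y\in R(\Delta\setminus\Gamma)\}$ of $G(\Delta)$. The plan is to show that the restriction of the epimorphism $\pi: G(\Delta)\to G(\Delta\setminus\Gamma)$ from the proof of Theorem \ref{quotient} to this subset is a bijection. The corollary then follows at once. Indeed, $\ker\pi=G(\Gamma)$, so two elements of $G(\Delta)$ lie in the same coset of $G(\Gamma)$ exactly when they have the same image under $\pi$. Hence a subset on which $\pi$ is bijective onto $G(\Delta\setminus\Gamma)$ meets every coset in exactly one element.

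To carry this out, write $p:R\Delta\to R(\Delta\setminus\Gamma)$ for the natural projection. As $R$-modules, $R\Delta=R\Gamma\oplus R(\Delta\setminus\Gamma)$, and $p$ is the projection onto the second summand along $R\Gamma$. The map $\pi$ is $1+x\mapsto 1+p(x)$. For $y\in R(\Delta\setminus\Gamma)$ we have $p(y)=y$, where the left-hand $y$ is viewed inside $R\Delta$ and the right-hand $y$ inside the ring $R(\Delta\setminus\Gamma)$ with its own multiplication. So $\pi(1+y)=1+y$ on the level of underlying sets. This shows that $\pi$ restricted to the subset is a bijection; in fact it is the identity on underlying coordinates.

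The two claims to verify are therefore the following.
\begin{enumerate}[(i)]
\item Existence: every $g=1+x\in G(\Delta)$ satisfies $g\,G(\Gamma)=(1+p(x))\,G(\Gamma)$, which holds because $\pi(g)=\pi(1+p(x))$.
\item Uniqueness: if $1+y$ and $1+y'$ with $y,y'\in R(\Delta\setminus\Gamma)$ lie in the same coset, then $1+y=\pi(1+y)=\pi(1+y')=1+y'$, so $y=y'$.
\end{enumerate}

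The only point needing care is that $1+y$ is invertible in $R\cdot 1\oplus R\Delta$, so that it really lies in $G(\Delta)$; this is Lemma \ref{nil}. It is also worth noting that the subset is generally \emph{not} a subgroup of $G(\Delta)$, because products computed in $R\Delta$ may acquire $\Gamma$-components. That is why the statement says "set of representatives" rather than "complement". No multiplicativity of the section $1+y\mapsto 1+y$ is needed, only that $\pi$ composed with this section is the identity, so I expect no real obstacle.
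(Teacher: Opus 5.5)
Your proposal is correct and matches the paper, which gives no separate proof because the corollary is read off from the proof of Theorem~\ref{quotient}: the epimorphism $1+x\mapsto 1+p(x)$ has kernel $G(\Gamma)$ and is the identity on the coordinates of the subset $\{1+y \mid y\in R(\Delta\setminus\Gamma)\}$, so that subset meets each coset exactly once. Your side remark that this subset is generally not a subgroup is also accurate, since $\Delta\setminus\Gamma$ need not be closed in $\Delta$.
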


\section{A group presentation for $G(\Delta)$}\label{t}

Let $H(\Delta)$ be the abstract group generated by elements $y_{i,j}(a)$, with  $(i,j)\in\Delta$ and $a\in R$, subject
to the defining relations (\ref{rel1})-(\ref{rel3}) (where, of course, each appearance of $x$ is to be substituted by~$y$).
For each closed subset $\Gamma$ of $\Delta$, let 
$H(\Gamma)$ be the subgroup of $H(\Delta)$ generated by all 
$y_{i,j}(a)$, with  $(i,j)\in\Gamma$ and $a\in R$.

\begin{prop}\label{gak} For $k\geq 1$ and each closed subset $\Gamma$ of $\Delta$, we have $\gamma_k(H(\Gamma))=H(\gamma_k(\Gamma))$.
\end{prop}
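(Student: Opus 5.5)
We argue by induction on $k$. The case $k=1$ is the definition of $H(\Gamma)$. For the inductive step, assume $\gamma_k(H(\Gamma))=H(\gamma_k(\Gamma))$. By Lemma \ref{nor}, $\gamma_k(\Gamma)$ and $\gamma_{k+1}(\Gamma)$ are normal subsets of $\Gamma$; in particular they are closed, so $H(\gamma_k(\Gamma))$ and $H(\gamma_{k+1}(\Gamma))$ make sense. We must show
$$
[H(\gamma_k(\Gamma)),H(\Gamma)]=H(\gamma_{k+1}(\Gamma)),
$$
and we prove the two inclusions separately.

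For $\supseteq$, let $(i,\ell)\in\gamma_{k+1}(\Gamma)=[\gamma_k(\Gamma),\Gamma]$. Then there is $j$ with either $(i,j)\in\gamma_k(\Gamma)$, $(j,\ell)\in\Gamma$, or $(i,j)\in\Gamma$, $(j,\ell)\in\gamma_k(\Gamma)$. In the first case (\ref{rel1}) gives $y_{i,\ell}(a)=[y_{i,j}(a),y_{j,\ell}(1)]$. In the second case it gives $y_{i,\ell}(a)=[y_{i,j}(a),y_{j,\ell}(1)]=[y_{j,\ell}(1),y_{i,j}(a)]^{-1}$. Either way $y_{i,\ell}(a)$ lies in $[H(\gamma_k(\Gamma)),H(\Gamma)]$. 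Since the generators of $H(\gamma_{k+1}(\Gamma))$ all have this form, the inclusion follows.

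For $\subseteq$, we first show that $N:=H(\gamma_{k+1}(\Gamma))$ is normal in $H(\Gamma)$. It suffices to check that conjugating a generator $y_{i,\ell}(a)$ of $N$ by $y_{p,q}(b)^{\pm1}$, with $(p,q)\in\Gamma$, stays in $N$. By (\ref{rel2}) the two elements commute unless $q=i$ or $p=\ell$; note that $p=i$ and $q=\ell$ cannot both hold in a problematic way, since then (\ref{rel3}) shows they commute. If $q=i$, relation (\ref{rel1}) shows the commutator is $y_{p,\ell}(\pm ba)$ or $1$. It is nontrivial only when $(p,\ell)\in\Delta$, and then normality of $\gamma_{k+1}(\Gamma)$ in $\Gamma$ (Lemma \ref{nor}) puts $(p,\ell)$ in $\gamma_{k+1}(\Gamma)$. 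The case $p=\ell$ is symmetric. Some care is needed in using (\ref{rel1}) with inverted arguments, since $y(a)^{-1}=y(-a)$ by (\ref{rel3}), and in the degenerate case where both $q=i$ and $p=\ell$. In that case $(i,\ell),(\ell,i)\in\Delta$, and (\ref{rel1}) gives trivial commutators because $(i,i),(\ell,\ell)\notin\Delta$ by (A1). So the conjugate is the generator times an element of $N$, and $N\trianglelefteq H(\Gamma)$.

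Next, modulo $N$, we show that every generator $y_{i,j}(a)$ with $(i,j)\in\gamma_k(\Gamma)$ commutes with every generator $y_{p,q}(b)$ with $(p,q)\in\Gamma$. Their commutator is either $1$ or $y_{i,q}(\cdot)^{\pm1}$ with $(i,q)\in[\gamma_k(\Gamma),\Gamma]=\gamma_{k+1}(\Gamma)$, or $y_{p,j}(\cdot)^{\pm1}$ with $(p,j)\in\gamma_{k+1}(\Gamma)$. Hence in $H(\Gamma)/N$ the image of $H(\gamma_k(\Gamma))$, which is generated by these elements, commutes with the image of $H(\Gamma)$. This gives $[H(\gamma_k(\Gamma)),H(\Gamma)]\subseteq N$.

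The main obstacle is the normality of $N$ and the bookkeeping of commutator cases, especially when both $j=p$ and $i=q$. Here (A1) and the oriented-cycle configurations allowed by our weak axioms must be handled via (\ref{rel1}), checking that the relevant pairs are either outside $\Delta$ or land in $\gamma_{k+1}(\Gamma)$ by Lemma \ref{nor}.
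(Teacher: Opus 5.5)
Your proof is correct and is essentially the paper's argument written out in full: the paper just says the result follows from Lemma \ref{nor} and the defining relations, and your induction supplies exactly that, namely normality of $H(\gamma_{k+1}(\Gamma))$ in $H(\Gamma)$ coming from normality of $\gamma_{k+1}(\Gamma)$, plus the commutator computation on generators (including the $(A1)$-degenerate case). One implicit step is worth stating: before you apply normality of $\gamma_{k+1}(\Gamma)$ \emph{inside} $\Gamma$, you need $(p,\ell)\in\Gamma$ (resp.\ $(i,q)\in\Gamma$), and this follows because $\Gamma$ is closed.
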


\begin{proof} This follows from Lemma \ref{nor} and the defining relations of $H(\Delta)$.
\end{proof}

\begin{theorem}\label{dir} For $k\geq 1$ and each closed subset $\Gamma$ of $\Delta$, we have 
$$\gamma_k(G(\Gamma))/\gamma_{k+1}(G(\Gamma))\cong (R^+)^{(\gamma_k(\Gamma)\setminus \gamma_{k+1}(\Gamma))},
$$
the restricted direct product of $|\gamma_k(\Gamma)\setminus \gamma_{k+1}(\Gamma)|$ copies of $R^+$.
\end{theorem}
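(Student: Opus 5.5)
The plan is to first identify $\gamma_k(G(\Gamma))$ with $G(\gamma_k(\Gamma))$, and then use the ring structure of $R\Gamma$ to compute the quotient. Since $\Gamma$ is closed, Lemma \ref{closed} lets us regard $\Gamma$ as the ambient relation, so we may assume $\Gamma=\Delta$. By Lemma \ref{nor}, each $\gamma_k(\Delta)$ is a normal subset of $\Delta$. Hence $G(\gamma_k(\Delta))$ is a normal subgroup by Proposition \ref{normal}, and Proposition \ref{product} (applied to the closed set $\gamma_k(\Delta)$, which satisfies (A1)-(A2)) shows it is generated by the $x_{i,j}(a)$ with $(i,j)\in\gamma_k(\Delta)$.

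\textbf{Step 1: $\gamma_k(G(\Delta))=G(\gamma_k(\Delta))$.} We argue by induction on $k$, the case $k=1$ being trivial. Suppose $\gamma_k(G(\Delta))=G(\gamma_k(\Delta))$.

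For $\supseteq$: each generator $x_{i,\ell}(a)$ with $(i,\ell)\in\gamma_{k+1}(\Delta)$ comes from some $j$ with $(i,j),(j,\ell)$ in $\gamma_k(\Delta)$ and $\Delta$ (in some order). Relation (\ref{rel1}) then gives $x_{i,\ell}(a)=[x_{i,j}(a),x_{j,\ell}(1)]$ or $x_{i,\ell}(a)=[x_{i,j}(1),x_{j,\ell}(a)]$, which is a commutator of an element of $\gamma_k(G)$ with an element of $G$.

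For $\subseteq$: by Theorem \ref{quotient}, $G(\gamma_k(\Delta))/G(\gamma_{k+1}(\Delta))\cong G(\gamma_k(\Delta)\setminus\gamma_{k+1}(\Delta))$. Alternatively, we check directly that $G(\gamma_{k+1}(\Delta))$ is normal and that commutators of generators $x_{i,j}(a)$, $(i,j)\in\gamma_k(\Delta)$, with generators $x_{k',\ell}(b)$ of $G(\Delta)$ lie in $G(\gamma_{k+1}(\Delta))$. This is immediate from (\ref{rel1})-(\ref{rel2}): such a commutator is either trivial or equals $x_{i,\ell}(ab)$ or $x_{k',j}(-ba)$-type elements, indexed by pairs in $[\gamma_k(\Delta),\Delta]$. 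Since $G(\gamma_{k+1}(\Delta))$ is normal and $\gamma_k(G)$ is generated modulo it by these, standard commutator generation \cite[Proposition 5.1.7]{R} gives $\gamma_{k+1}(G)\subseteq G(\gamma_{k+1}(\Delta))$. The point to watch is that commutators of general elements reduce to those of generators modulo a normal subgroup; the cited generation result handles this.

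\textbf{Step 2: the quotient.} Set $\Sigma=\gamma_k(\Delta)\setminus\gamma_{k+1}(\Delta)$. Theorem \ref{quotient}, applied to the ambient relation $\gamma_k(\Delta)$ with normal subset $\gamma_{k+1}(\Delta)$, gives $\gamma_k(G)/\gamma_{k+1}(G)\cong G(\Sigma)$, where $\Sigma$ carries the multiplication induced from the ring $R\gamma_k(\Delta)$. Normality of $\gamma_{k+1}(\Delta)$ inside $\gamma_k(\Delta)$ holds because it is normal in $\Delta$.

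It then suffices to show that the product $e_{i,j}e_{j,\ell}$ vanishes in $R\Sigma$ for $(i,j),(j,\ell)\in\Sigma$. If $(i,\ell)\in\Delta$, then $(i,\ell)\in[\gamma_k(\Delta),\gamma_k(\Delta)]\subseteq\gamma_{2k}(\Delta)\subseteq\gamma_{k+1}(\Delta)$, using the inclusion $[\gamma_k,\gamma_k]\subseteq\gamma_{k+1}$. That inclusion holds since $\gamma_k(\Delta)\subseteq\Delta$ gives $[\gamma_k,\gamma_k]\subseteq[\gamma_k,\Delta]=\gamma_{k+1}$. So $R\Sigma$ has zero multiplication, and $1+x\mapsto x$ is an isomorphism $G(\Sigma)\cong(R\Sigma,+)\cong (R^+)^{(\Sigma)}$.

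The main obstacle is the inclusion $\gamma_{k+1}(G)\subseteq G(\gamma_{k+1}(\Delta))$ in Step 1. This is where one needs that commutators of arbitrary elements, not just generators, land correctly; that follows from the normality of $G(\gamma_{k+1}(\Delta))$ and the fact that $G(\gamma_k(\Delta))/G(\gamma_{k+1}(\Delta))$ is central in $G(\Delta)/G(\gamma_{k+1}(\Delta))$, which is checked on generators.
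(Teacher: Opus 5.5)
Your proof is correct and follows essentially the paper's route: identify $\gamma_k(G(\Gamma))$ with $G(\gamma_k(\Gamma))$, apply Theorem \ref{quotient} to the normal subset $\gamma_{k+1}(\Gamma)$ of $\gamma_k(\Gamma)$, and observe that $\gamma_k(\Gamma)\setminus\gamma_{k+1}(\Gamma)$ has trivial multiplication. The only difference is that the paper gets $\gamma_k(G(\Gamma))=G(\gamma_k(\Gamma))$ by pushing Proposition \ref{gak} forward along the epimorphism $H(\Delta)\to G(\Delta)$, whereas you run the same induction directly in $G(\Delta)$; you also spell out the zero-product argument that the paper compresses into ``all generators commute''.
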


\begin{proof} By Proposition \ref{product}, we have a group epimorphism $f:H(\Delta)\to G(\Delta)$ sending each
$y_{i,j}(a)$ into $x_{i,j}(a)$ and hence $H(\Gamma)$ onto $G(\Gamma)$. Thus,
$\gamma_k(G(\Gamma))=G(\gamma_k(\Gamma))$ and $\gamma_{k+1}(G(\Gamma))=G(\gamma_{k+1}(\Gamma))$ by Proposition \ref{gak}.
By Lemma \ref{closed}, $G(\gamma_k(\Gamma))$ is the extended McLain group associated to $\gamma_k(\Gamma)$.
All generators of the extended McLain group associated
to $\gamma_k(\Gamma)\setminus \gamma_{k+1}(\Gamma)$ commute with each other, so the result follows from Theorem \ref{quotient}.
\end{proof}

 \begin{prop}\label{ln} Given any $n\in\N$ and $h_1,\dots,h_n\in H(\Delta)$, there is a finite and closed subset
$\Gamma$ of $\Delta$ such that $h_1,\dots,h_n\in H(\Gamma)$, which is nilpotent. 
In particular, $H(\Delta)$ is locally nilpotent.
\end{prop}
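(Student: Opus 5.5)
Each $h_m$ is a finite word in the generators $y_{i,j}(a)$ and their inverses, so only finitely many pairs $(i,j)\in\Delta$ appear across all the words representing $h_1,\dots,h_n$. Let $\Omega$ be this finite set of pairs. As noted before Proposition \ref{product}, $(\Phi(\Omega)\times\Phi(\Omega))\cap\Delta$ is a finite closed subset of $\Delta$ containing $\Omega$. Set $\Gamma=\Gamma(\Omega)$, or simply this explicit set, which is enough here. Then $\Gamma$ is finite and closed, and every generator occurring in the words lies in $H(\Gamma)$, so $h_1,\dots,h_n\in H(\Gamma)$.

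The main point is the nilpotency of $H(\Gamma)$, and I will deduce it from Proposition \ref{gak} and Lemma \ref{nilp}. Put $|\Gamma|=N$. Lemma \ref{nilp} gives $\gamma_N(\Gamma)=\emptyset$. Proposition \ref{gak} then gives $\gamma_N(H(\Gamma))=H(\gamma_N(\Gamma))=H(\emptyset)$, and $H(\emptyset)$ is the subgroup generated by the empty set, namely the trivial group. Hence $H(\Gamma)$ is nilpotent of class less than $N$.

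Two boundary cases need a check. First, for $N=0$ the group $H(\Gamma)$ is trivial, so the claim holds at once. Second, Proposition \ref{gak} is stated for closed $\Gamma$, but it is applied with $k=N$, and $\gamma_N(\Gamma)$ is normal in $\Gamma$ by Lemma \ref{nor}, so no closedness issue arises.

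For local nilpotency, any finitely generated subgroup $\langle h_1,\dots,h_n\rangle$ lies in some $H(\Gamma)$ as above. Subgroups of nilpotent groups are nilpotent, so this subgroup is nilpotent. Therefore $H(\Delta)$ is locally nilpotent.

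I do not expect any step to be a genuine obstacle. The only place needing care is making sure the finite closed set really contains every index pair involved, and the $\Phi(\Omega)$ construction handles that directly.
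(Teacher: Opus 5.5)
Your proof is correct and follows the paper's argument: you collect the finitely many index pairs into $\Omega$, pass to a finite closed set such as $\Gamma(\Omega)$, and combine Lemma \ref{nilp} with Proposition \ref{gak} to get $\gamma_N(H(\Gamma))=H(\emptyset)=1$. The one flaw is your side claim ``class less than $N$'', which fails for $N=1$: then $\gamma_1(\Gamma)=\Gamma\neq\emptyset$, and $H(\Gamma)$ is a nontrivial abelian group by (\ref{rel3}), so it has class exactly $1$, though nilpotency itself is unaffected.
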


\begin{proof} Let $h_1,\dots,h_n\in H(\Delta)$. Corresponding to each $h_k$ there is a finite subset $\Omega_k$ of $\Delta$ 
such that $h_k$ is the finite product of elements $y_{i,j}(a)$, with $(i,j)\in\Omega_k$ and $a\in R$. 
Let $\Omega$ be the union of all $\Omega_k$, and set $\Gamma=\Gamma(\Omega)$. Then $\Gamma$ is a finite closed subset of $\Delta$
and each $h_k$ belongs to $H(\Gamma)$.
It follows from Lemma \ref{nilp} and Proposition \ref{gak} that $H(\Gamma)$ is nilpotent.
\end{proof}

\begin{theorem}\label{main} The extended McLain group $G(\Delta)$ is generated by all $x_{i,j}(a)$, with $(i,j)\in\Delta$ and $a\in R$,
subject to the defining relations (\ref{rel1})-(\ref{rel3}).
\end{theorem}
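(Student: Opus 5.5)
We must show that the epimorphism $f:H(\Delta)\to G(\Delta)$, $y_{i,j}(a)\mapsto x_{i,j}(a)$, given by Proposition \ref{product}, is injective. The plan is to reduce to finite closed subsets and then induct along the lower central series.

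Let $h\in\ker f$. By Proposition \ref{ln}, there is a finite closed subset $\Gamma$ of $\Delta$ with $h\in H(\Gamma)$. It therefore suffices to prove that the restriction $f_\Gamma:H(\Gamma)\to G(\Gamma)$ is injective for every finite closed $\Gamma$. Note that $H(\Gamma)$ is a subgroup of $H(\Delta)$, so it need not a priori be presented by the relations supported on $\Gamma$. This causes no difficulty, since the argument below only uses that $H(\Gamma)$ is generated by the $y_{i,j}(a)$, $(i,j)\in\Gamma$, and that these generators satisfy (\ref{rel1})-(\ref{rel3}).

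We prove injectivity of $f_\Gamma$ by descending induction on $k$, showing that
$$
f_k:\gamma_k(H(\Gamma))=H(\gamma_k(\Gamma))\to G(\gamma_k(\Gamma))=\gamma_k(G(\Gamma))
$$
is injective for all $k$. The equalities are Proposition \ref{gak} and the proof of Theorem \ref{dir}.

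\emph{Base case.} By Lemma \ref{nilp}, $\gamma_n(\Gamma)=\emptyset$ for $n=|\Gamma|$, so both groups are trivial.

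\emph{Inductive step.} Assume $f_{k+1}$ is injective. Since $f_k$ maps $\gamma_{k+1}(H(\Gamma))$ onto $\gamma_{k+1}(G(\Gamma))$, the five lemma (or a direct diagram chase) reduces injectivity of $f_k$ to injectivity of the induced map
$$
\bar f_k:\gamma_k(H(\Gamma))/\gamma_{k+1}(H(\Gamma))\to \gamma_k(G(\Gamma))/\gamma_{k+1}(G(\Gamma)).
$$
By Theorem \ref{dir}, the target is $(R^+)^{(\gamma_k(\Gamma)\setminus\gamma_{k+1}(\Gamma))}$. Concretely, the isomorphism of Theorem \ref{quotient} sends $x_{i,j}(a)$ to $a$ in the $(i,j)$-coordinate when $(i,j)\notin\gamma_{k+1}(\Gamma)$, and to $0$ otherwise.

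The key step is to construct an inverse to $\bar f_k$, namely a homomorphism $\psi$ from the direct sum back to the source quotient, defined by $a\,\epsilon_{i,j}\mapsto y_{i,j}(a)\gamma_{k+1}(H(\Gamma))$. This is well defined because of two facts about the source quotient:
\begin{itemize}
\item It is abelian. This is the general fact $[\gamma_k,\gamma_k]\subseteq\gamma_{2k}\subseteq\gamma_{k+1}$.
\item Relation (\ref{rel3}) makes $a\mapsto y_{i,j}(a)$ additive.
\end{itemize}
The source quotient is generated by the cosets of $y_{i,j}(a)$ with $(i,j)\in\gamma_k(\Gamma)$, since $H(\gamma_k(\Gamma))$ is generated by these elements. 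Moreover, those cosets with $(i,j)\in\gamma_{k+1}(\Gamma)$ are trivial. Hence $\psi$ is surjective. Since $\bar f_k\circ\psi$ is the identity on the generators $a\,\epsilon_{i,j}$, it follows that $\psi$ is bijective, and so $\bar f_k$ is an isomorphism.

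This completes the induction. Taking $k=1$ shows that $f_\Gamma$ is injective, hence $h=1$ and $f$ is an isomorphism.

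The main subtlety I expect is making sure the identifications $\gamma_k(G(\Gamma))=G(\gamma_k(\Gamma))$ and the explicit form of the quotient map from Theorem \ref{quotient} are compatible with the coordinates used above. Specifically, I need to check that under the map $R\Gamma_k\to R(\Gamma_k\setminus\Gamma_{k+1})$, a product of generators maps to the coordinatewise sum. This holds because the quotient ring $R(\Gamma_k\setminus\Gamma_{k+1})$ has zero multiplication: any product $e_{i,j}e_{j,l}$ of elements of $\gamma_k(\Gamma)$ lies in $\gamma_{2k}(\Gamma)\subseteq\gamma_{k+1}(\Gamma)$.
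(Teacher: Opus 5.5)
Your proof is correct and is essentially the paper's argument: both reduce via Proposition \ref{ln} to a finite closed $\Gamma$ and rest on the fact that $f$ induces an injection $\gamma_k(H(\Gamma))/\gamma_{k+1}(H(\Gamma))\to\gamma_k(G(\Gamma))/\gamma_{k+1}(G(\Gamma))$. You justify that injection more explicitly through the inverse map $\psi$, whereas the paper skips the descending induction: it takes $k$ with $h\in\gamma_k(H(\Gamma))\setminus\gamma_{k+1}(H(\Gamma))$ and concludes $f(h)\notin\gamma_{k+1}(G(\Gamma))$, which is your injectivity of $\bar f_k$ used once.
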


\begin{proof} By Proposition \ref{product}, we have a group epimorphism $f:H(\Delta)\to G(\Delta)$ sending each
$y_{i,j}(a)$ into $x_{i,j}(a)$. Suppose $1\neq h\in H(\Delta)$.
We want to show that $f(h)\neq 1$. By Proposition \ref{ln}, there is a finite and closed subset
$\Gamma$ of $\Delta$ such that $h\in H(\Gamma)$, which is nilpotent. As $h\neq 1$, there is $k\in\N$ such that $h\in\gamma_k(H(\Gamma))$
but $h\notin\gamma_{k+1}(H(\Gamma))$. We deduce from Proposition \ref{gak} and the defining relations of $H(\Delta)$
that there is a finite nonempty subset $\Omega$ of $\gamma_k(\Gamma)\setminus \gamma_{k+1}(\Gamma)$, nonzero elements
$a_{i,j}$ of $R$ for each $(i,j)\in \Omega$, and an element $t$ of $\gamma_{k+1}(H(\Gamma))$, such that
$$h=\underset{(i,j)\in\Omega}\Pi y_{i,j}(a_{i,j})t.$$ Therefore
$f(h)\in \gamma_k(G(\Gamma))=G(\gamma_k(\Gamma))$ is not in $\gamma_{k+1}(G(\Gamma))=G(\gamma_{k+1}(\Gamma))$, so
$f(h)\neq 1$.
\end{proof}

When $R$ is a field and $\Delta$ is a strict total order on $\Lambda$, \cite{BD} gives a simpler proof of Theorem \ref{main},
appealing to the fact that each $e_{ij}$ is the endomorphism of an $R$-vector space with basis $(v_{i})_{i\in\Lambda}$
that sends $v_i$ to $v_j$ and annihilates every other basis vector. This argument breaks down in our context, where $e_{ij}e_{jk}$
need not equal $e_{ik}$, so the $e_{ij}$ cannot be viewed as such endomorphisms. Moreover, their proof uses order properties that
do not hold in our setting, as indicated in Note~\ref{eje}.

\section{A canonical form for the elements of $G(\Delta)$}

Let $g\in G(\Delta)$. Then, as seen in Section \ref{t}, there is a finite and closed subset
$\Gamma$ of $\Delta$ such that $g\in G(\Gamma)$, with $G(\Gamma)$ is nilpotent. The intersection, say $\Gamma(g)$, of all such subsets 
$\Gamma$ also has this property and is uniquely determined by $g$.

Let $n\geq 0$ be the nilpotency class of $\Gamma$. Then, there exist unique elements $g_1,\dots,g_{n-1}$
such that each $g_k$ belongs to the subset $G(\gamma_k(\Gamma)\setminus \gamma_{k+1}(\Gamma))$
of $G(\Gamma)$, and $g=g_1\cdots g_{n-1}$. Note that if $g=1$ this is vacuously true, and that if we take $\Gamma=\Gamma(g)$
and $g\neq 1$, then $g_1\neq 1$. Each $g_k$ is the finite product of unique elements 
$x_{i,j}(a_{i,j})$ with $(i,j)\in \gamma_k(\Gamma)\setminus \gamma_{k+1}(\Gamma)$  and $a_{i,j}\in R$. 
We may impose a strict total order $<$
on $\Gamma$ such that if $(i,j)\in \gamma_k(\Gamma)$ and $(u,v)\in \gamma_\ell(\Gamma)$ with $k$ less than $\ell$, 
then $(i,j)<(u,v)$.

For $(i,j)\in\Delta$, set $X_{i,j}=\{x_{i,j}(a)\,|\, a\in R\}$, the basic subgroup of $G(\Delta)$
associated to $(i,j)$.

\begin{prop}\label{canon} Let $\Gamma$ be a finite and closed subset
of $\Delta$ and let $\prec$ be any strict order on $\Gamma$.
Then $G(\Gamma)$ is equal to the product, in increasing $\prec$-order,
of its subgroups $X_{i,j}$, $(i,j)\in\Gamma$, with uniqueness of expression.
\end{prop}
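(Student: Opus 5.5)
We argue by induction on $|\Gamma|$, peeling off one central root subgroup at a time and using Theorem \ref{quotient} to pass to a smaller extended McLain group. Here $\prec$ is understood to be a strict total order on $\Gamma$. By Lemma \ref{closed}, $\Gamma$ satisfies (A1)-(A2), so we may regard $\Gamma$ itself as the ambient relation. Then $G(\Gamma)$ is the extended McLain group of $\Gamma$, and the whole statement is intrinsic to $\Gamma$. If $\Gamma=\emptyset$ there is nothing to prove, so assume $\Gamma\neq\emptyset$.

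\emph{Step 1: choose the entry to remove.} Since $\gamma_1(\Gamma)=\Gamma\neq\emptyset$ and $\gamma_n(\Gamma)=\emptyset$ for $n=|\Gamma|$ by Lemma \ref{nilp}, there is a largest $c$ with $\gamma_c(\Gamma)\neq\emptyset$. Pick $(u,v)\in\gamma_c(\Gamma)$. We have $[\{(u,v)\},\Gamma]\subseteq[\gamma_c(\Gamma),\Gamma]=\gamma_{c+1}(\Gamma)=\emptyset$. Hence the singleton $\{(u,v)\}$ is normal in $\Gamma$, since both normality conditions become vacuous. By (\ref{rel1})--(\ref{rel2}), $X_{u,v}$ commutes with every generator $x_{i,j}(a)$, $(i,j)\in\Gamma$. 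By Proposition \ref{product} these generate $G(\Gamma)$, so $X_{u,v}$ is central in $G(\Gamma)$.

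\emph{Step 2: set up the induction.} Let $\Gamma_0=\Gamma\setminus\{(u,v)\}$. By Lemma \ref{axiom} it satisfies (A1)-(A2), and it carries the restriction of $\prec$. By Theorem \ref{quotient} (applied with $\Gamma$ as ambient), the projection $R\Gamma\to R\Gamma_0$ induces an epimorphism $p:G(\Gamma)\to G(\Gamma_0)$ with kernel $G(\{(u,v)\})=X_{u,v}$. Moreover, $p$ sends $x_{i,j}(a)$ to the corresponding generator of $G(\Gamma_0)$ for $(i,j)\in\Gamma_0$. Since $|\Gamma_0|<|\Gamma|$, the inductive hypothesis applies to $\Gamma_0$ viewed as its own ambient relation. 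This is legitimate because the proposition only uses that the set satisfies (A1)-(A2) and is closed in itself.

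\emph{Step 3: existence and uniqueness.} Let $g\in G(\Gamma)$. By induction,
$$p(g)=\prod_{(i,j)\in\Gamma_0}^{\prec} x_{i,j}(a_{i,j})$$
for suitable $a_{i,j}\in R$. The same ordered word $w$, computed in $G(\Gamma)$, satisfies $p(w)=p(g)$, so $g=w\,x_{u,v}(b)$ for some $b$. Because $x_{u,v}(b)$ is central, it may be moved to the position of $(u,v)$ in the $\prec$-order, which gives the desired expression. For uniqueness, suppose two increasing products are equal. Applying $p$ kills the $X_{u,v}$ factor, and by inductive uniqueness the coefficients at all $(i,j)\in\Gamma_0$ agree. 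Using centrality, move both $X_{u,v}$ factors to the end and cancel the identical remaining words. This leaves $x_{u,v}(b)=x_{u,v}(b')$, hence $b=b'$.

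The only delicate point is Step 1: we need an entry that is simultaneously central and normal as a singleton, so that the quotient of Theorem \ref{quotient} is again an extended McLain group on a smaller relation. Taking $(u,v)$ in the last nonzero term of the descending series $\gamma_k(\Gamma)$ handles both requirements at once. Note that this argument avoids any need for maximal elements in the sense of Note \ref{eje}; such elements need not exist in our setting.
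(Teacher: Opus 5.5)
Your argument is correct, but it follows a different route from the paper's.

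\textbf{The paper's route.} It first settles the case of one special total order $<$, which lists the layers $\gamma_k(\Gamma)\setminus\gamma_{k+1}(\Gamma)$ of the lower central series in increasing $k$. This relies on $\gamma_k(G(\Gamma))=G(\gamma_k(\Gamma))$ and the abelian layer quotients of Theorem~\ref{dir}. It then notes that every tail of the $<$-ordered product is a normal subgroup of $G(\Gamma)$. Finally it passes to an arbitrary $\prec$ through the general permutation lemma, Proposition~\ref{gt}.

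\textbf{Your route.} You peel off a single root subgroup $X_{u,v}$ from the last nonzero $\gamma_c(\Gamma)$. With $\Gamma$ as ambient, Theorem~\ref{quotient} and Lemma~\ref{axiom} put you in the smaller extended McLain group $G(\Gamma\setminus\{(u,v)\})$. Since $X_{u,v}$ is central, its factor can be inserted at any position. So arbitrary orders are handled directly and Proposition~\ref{gt} is never needed.

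\textbf{What each buys.} Your approach is more self-contained and more robust. It bypasses the discussion preceding the proposition, whose claim that each $g_k$ is a product of the $x_{i,j}(a_{i,j})$ from its own layer is, read literally, false for the relation $\Delta_n$ of Note~\ref{eje}. That claim holds only modulo $G(\gamma_{k+1}(\Gamma))$. Your approach also uses exactly the new flexibility the paper introduces: $\Gamma\setminus\{(u,v)\}$ is typically not transitive even when $\Gamma$ is (remove $(1,3)$ from $\{(1,2),(2,3),(1,3)\}$). Hence the same induction would leave the class of classical McLain groups. The paper's route, in exchange, stays inside the single group $G(\Gamma)$ and reuses the lower central series it has already computed. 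It also isolates a group-theoretic fact, Proposition~\ref{gt}, that is useful well beyond this setting.

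A minor point: you only need that $\gamma_k(\Gamma)$ is eventually empty. Lemma~\ref{nilp} as literally stated fails when $|\Gamma|=1$, but this does not affect your argument.
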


\begin{proof} The above discussion settles the result for $<$, whose choice ensures that 
given any $(i,j)\in\Gamma$, the product, in increasing $<$-order, of all $X_{u,v}$ such that
$(u,v)\in\Gamma$ and $(i,j)\leq (u,v)$, is a normal subgroup of $G(\Gamma)$. The result now follows
from Proposition \ref{gt} below.
\end{proof}

\begin{prop}\label{gt} Let $G$ be a group with subgroups $G_1,\dots,G_n$ such that $G=G_1\cdots G_n$
with uniqueness of expression, and satisfying $G_i\cdots G_n\trianglelefteq G$ for all $1\leq i\leq n$. Then
for any $\sigma\in S_n$, we have $G=G_{\sigma(1)}\cdots G_{\sigma(n)}$, with uniqueness of expression.
\end{prop}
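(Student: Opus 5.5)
Write $N_i=G_i\cdots G_n$ for $1\le i\le n+1$, with $N_{n+1}=1$. By hypothesis each $N_i$ is a normal subgroup of $G$. The plan is to use the quotients $N_i/N_{i+1}$ as a "coordinate system": each $G_i$ projects isomorphically onto $N_i/N_{i+1}$, and both existence and uniqueness for an arbitrary ordering will follow from this.

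\textbf{Step 1 (structure of $N_i$).} For each $i$ we have $N_i=G_iN_{i+1}$ and $G_i\cap N_{i+1}=1$. The first equality is the definition. For the second, take $g\in G_i\cap N_{i+1}$ and write $g=g_{i+1}\cdots g_n$ with $g_k\in G_k$. Then $1\cdots1\cdot g\cdot 1\cdots 1$ and $1\cdots 1\cdot 1\cdot g_{i+1}\cdots g_n$ are two expressions of the same element of $G=G_1\cdots G_n$. Uniqueness of expression forces $g=1$. Hence $G_i\to N_i/N_{i+1}$ is an isomorphism, and $N_i=N_{i+1}\rtimes G_i$.

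\textbf{Step 2 (uniqueness for $\sigma$).} Suppose $g_{\sigma(1)}\cdots g_{\sigma(n)}=h_{\sigma(1)}\cdots h_{\sigma(n)}$ with $g_k,h_k\in G_k$. We show by induction on $m$ that $g_m=h_m$ for all $m$. Assume $g_k=h_k$ for all $k<m$. Choose the last position $p$ in the word $\sigma(1),\dots,\sigma(n)$ at which an index $k<m$ occurs. Cancelling common factors from the left as far as possible is messy, so instead we pass to the quotient $G/N_{m+1}$. There the factors with index $>m$ vanish, leaving an equality of words in $G_1,\dots,G_m$ (images). Next we pass to $N_1/N_{m+1}$ and note that $N_m/N_{m+1}\cong G_m$ is normal in $G/N_{m+1}$. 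Conjugating the single $G_m$-factor to the end changes only factors $g_k$ with $k<m$, by elements of $N_m$, and these agree on both sides. Comparing cosets of $N_m$, the factors with index $<m$ coincide on both sides by the inductive hypothesis. More cleanly, write each side as $A\,g_m\,B$ and $A\,h_m\,B$, where $A$ and $B$ are the products of the factors with index $<m$ preceding and following position $\sigma^{-1}(m)$; by induction these are the same elements on both sides. Cancelling $A$ on the left and $B$ on the right in $G/N_{m+1}$ gives $g_mN_{m+1}=h_mN_{m+1}$, and Step 1 then yields $g_m=h_m$.

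\textbf{Step 3 (existence).} We show by downward induction on $m$ that $N_m=\prod_k G_k$, where the product runs over the indices $k\ge m$ taken in $\sigma$-order. The case $m=n$ is trivial. For the inductive step, let $x\in N_m$. Because $N_{m+1}$ is normal, every element of $N_m=G_mN_{m+1}$ can be written as $u\,g_m\,v$ for any prescribed $u,v\in N_{m+1}$-cosets; precisely, $N_m=N_{m+1}G_m=G_mN_{m+1}$. We must place $g_m$ at its $\sigma$-position among the factors with index $>m$. Write the target product as $P\,G_m\,Q$, where $P,Q$ are the products of the $G_k$ ($k>m$) before and after $m$ in $\sigma$-order. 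Given $x=g_my$ with $y\in N_{m+1}$, we need $x=p\,g_m\,q$ with $p\in P$ and $q\in Q$. This is not immediate, since $P$ and $Q$ are only subsets. Here is the plan: write $y$ in $\sigma$-order using the inductive hypothesis, then move $g_m$ rightwards past the $P$-part of that factorization by conjugation. Conjugation by $g_m$ preserves each $N_k$ but not each $G_k$, so the moved factors must be refactored, which again uses the inductive hypothesis together with an inner induction on position. This refactoring step is the main obstacle. A cleaner alternative is to obtain existence from uniqueness by a counting or surjectivity argument; that fails for infinite groups. A more promising route is to argue in $N_m/N_{k}$ by successive lifting, peeling off one index at a time from the top of $\sigma$'s order.
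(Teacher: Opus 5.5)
\textbf{Uniqueness is fine; existence is not proved.} Your uniqueness argument is sound once the abandoned scratch work in Step~2 is deleted. The ``more cleanly'' version works: pass to $G/N_{m+1}$, where factors of index $>m$ vanish and those of index $<m$ agree by induction; cancel, and use $G_m\cap N_{m+1}=1$ from Step~1. This is a valid and slightly more direct alternative to the paper's induction on $n$ via $G/G_n$.

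The genuine gap is existence. Step~3 is not a proof, as you acknowledge, and the obstruction is structural rather than technical. Your downward induction tries to insert a factor from $G_m$, which is \emph{not} normal, into a $\sigma$-ordered factorization of $N_{m+1}$. Conjugation by $g_m$ preserves $N_{m+1}$ but not the product sets $P$ and $Q$. The resulting discrepancies lie in $[G_m,N_{m+1}]\subseteq N_{m+1}$ and need not lie in $N_{m+2}$. So each refactoring produces new misplaced factors at the same depth, and nothing makes the process terminate.

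\textbf{The missing idea} is to run the induction the other way, so that the factor being repositioned always comes from a normal layer. This is what the paper does. Since $G_n=N_n$ is normal in $G$, a $G_n$-factor can be slid to any position: $uzv=uv(v^{-1}zv)$ with $v^{-1}zv\in G_n$. This reduces to the case $\sigma(n)=n$. Induction applied to $G/G_n$, with the images of $G_1,\dots,G_{n-1}$ (which inherit both hypotheses), then gives $x=g_{\sigma(1)}\cdots g_{\sigma(n-1)}z$ with $z\in G_n$.

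In your notation this unrolls to an upward induction on $m\ge 0$. The claim is that every $x\in G$ can be written $x=wz$, with $w$ a $\sigma$-ordered product of elements of $G_k$ for $k\le m$, and $z\in N_{m+1}$. Start with $w=1$ and $z=x$. For the step:
\begin{enumerate}[(i)]
\item Split $w=ab$ at the $\sigma$-position of $m+1$, so $x=a(bzb^{-1})b$.
\item Since $bzb^{-1}\in N_{m+1}=G_{m+1}N_{m+2}$, write $bzb^{-1}=g'u$ with $g'\in G_{m+1}$ and $u\in N_{m+2}$.
\item Then $x=(ag'b)(b^{-1}ub)$, and $b^{-1}ub\in N_{m+2}$ by normality.
\end{enumerate}
Taking $m=n$, where $N_{n+1}=1$, gives existence. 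Your closing hint about ``successive lifting'' is close to this, but the indices must be added in the filtration order $1,2,\dots,n$, not in $\sigma$'s order.
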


\begin{proof} By induction on $n$. If $n=1$ there is nothing to do. Suppose $n>1$ and the result is true for $n-1$.
By hypothesis, $G_n\trianglelefteq G$. Let $\pi:G\to G/G_n$ be the canonical projection, and set $H=\pi(G)$, $H_i=\pi(G_i)$
for all $1\leq i\leq n$. Let $\sigma\in S_n$. We may assume without loss that $\sigma(n)=n$ in both the existence and the uniqueness of the decomposition at stake. By inductive hypothesis, $H=H_{\sigma(1)}\cdots H_{\sigma(n-1)}$, with uniqueness of expression.
Thus $G=G_{\sigma(1)}\cdots G_{\sigma(n)}$.
Suppose $f_{\sigma(1)}\cdots f_{\sigma(n)}=g_{\sigma(1)}\cdots g_{\sigma(n)}$, 
where $f_{\sigma(i)}, g_{\sigma(i)}\in G_{\sigma(i)}$
for all $1\leq i\leq n$. Applying $\pi$ to both sides and appealing to uniqueness in $H$, 
we infer the existence of $e_{\sigma(1)},\dots,e_{\sigma(n-1)}\in G_n$ such that $f_{\sigma(i)}=g_{\sigma(i)}e_{\sigma(i)}$
for all $1\leq i\leq n-1$. Uniqueness in $G$ forces  $f_{\sigma(i)}=g_{\sigma(i)}$ for all 
$1\leq i\leq n-1$, which then yields $f_{n}=g_n$.
\end{proof}

\begin{theorem}\label{dec} Let $\prec$ be any strict order on $\Delta$. Then $G(\Delta)$
is equal to the product, in increasing $\prec$-order,
of its subgroups $X_{i,j}$, $(i,j)\in\Delta$, with uniqueness of expression.
\end{theorem}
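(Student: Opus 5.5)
Since $\Delta$ may be infinite, "the product in increasing $\prec$-order" means: every $g\in G(\Delta)$ can be written as $\prod_{(i,j)\in\Omega} x_{i,j}(a_{i,j})$ for some finite $\Omega\subseteq\Delta$, factors taken in increasing $\prec$-order. The expression is unique once we require all $a_{i,j}\neq 0$, or equivalently once we allow $a_{i,j}=0$ and identify expressions differing only by trivial factors. Here $\prec$ is a strict total order, as in Proposition \ref{canon}. The plan is to reduce everything to Proposition \ref{canon} by passing to finite closed subsets.

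\emph{Existence.} Let $g\in G(\Delta)$. As noted at the start of this section (via Proposition \ref{product} and the local argument of Proposition \ref{ln}, transported by the epimorphism $f$), there is a finite closed subset $\Gamma$ of $\Delta$ with $g\in G(\Gamma)$. Restricting $\prec$ to $\Gamma$ gives a strict total order on $\Gamma$. Proposition \ref{canon} then writes $g$ as the product, in increasing $\prec$-order, of elements of $X_{i,j}$ with $(i,j)\in\Gamma$. Discarding trivial factors gives an expression of the required form.

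\emph{Uniqueness.} Suppose
\[
\prod_{(i,j)\in\Omega_1} x_{i,j}(a_{i,j})=\prod_{(i,j)\in\Omega_2} x_{i,j}(b_{i,j}),
\]
with both products in increasing $\prec$-order, $\Omega_1,\Omega_2$ finite, and all coefficients nonzero. Let $\Gamma=\Gamma(\Omega_1\cup\Omega_2)$, which is a finite closed subset of $\Delta$ since it lies inside $(\Phi\times\Phi)\cap\Delta$ for $\Phi=\Phi(\Omega_1\cup\Omega_2)$. Both sides lie in $G(\Gamma)$. Pad each product with trivial factors $x_{u,v}(0)$ for the remaining $(u,v)\in\Gamma$, inserted in their $\prec$-positions. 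This gives two expressions of the same element as an increasing $\prec$-ordered product over all of $\Gamma$. The uniqueness part of Proposition \ref{canon} forces the two coefficient families to agree. Hence $\Omega_1=\Omega_2$ and $a_{i,j}=b_{i,j}$.

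The only point requiring care is that the $\prec$-order on $\Gamma$ used in Proposition \ref{canon} is the restriction of the global order on $\Delta$. This holds because the relative order of the factors in a product over $\Omega\subseteq\Gamma$ is unchanged by inserting identity factors. The main potential obstacle is therefore not here but in Proposition \ref{canon} itself, which we may assume. If $\prec$ were only a partial order, I would first extend it to a total order on each finite $\Gamma$; the statement is only meaningful for total orders anyway.
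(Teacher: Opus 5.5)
Your proposal is correct and follows the paper's proof. Both pass to a finite closed $\Gamma\subseteq\Delta$ containing $g$ (for uniqueness, containing both expressions) and invoke existence and uniqueness from Proposition \ref{canon} for the restricted order; your padding with trivial factors $x_{u,v}(0)$ just makes explicit a step the paper leaves implicit.
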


\begin{proof} Given any $g\in G$, there is a finite and closed subset $\Gamma$ of $\Delta$
such that $g\in G(\Gamma)$,
so existence follows from Proposition \ref{canon}. Any 2 decompositions for $g$ will occur in some common $G(\Gamma)$, with 
$\Gamma\subseteq \Delta$ finite and closed, and we have uniqueness there, also by Proposition \ref{canon}.
\end{proof}

\section{Upper central series of $G(\Delta)$}

We say that $(i,j)\in\Delta$ is isolated if there is no $(j,k)\in \Delta$ such that $(i,k)\in\Delta$, and
there is no $(\ell,i)\in \Delta$ such that $(\ell,j)\in\Delta$.
Let $\Gamma_1$ be the subset of $\Delta$ consisting of all its isolated elements.  It is clear  
that $\Gamma_1$ is normal.

\begin{prop}\label{center} The center of $G(\Delta)$ is equal to $G(\Gamma_1)$.
\end{prop}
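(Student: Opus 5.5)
Two inclusions are needed: $G(\Gamma_1)\subseteq Z(G(\Delta))$ and $Z(G(\Delta))\subseteq G(\Gamma_1)$.

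\emph{First inclusion.} By Proposition \ref{product}, $G(\Delta)$ is generated by the elements $x_{k,\ell}(b)$. Since $\Gamma_1$ is normal, hence closed, $G(\Gamma_1)$ is generated by the $x_{i,j}(a)$ with $(i,j)\in\Gamma_1$; this is Proposition \ref{product} applied to $\Gamma_1$, which satisfies (A1)-(A2) by Lemma \ref{closed}. So it suffices to show that $x_{i,j}(a)$ commutes with $x_{k,\ell}(b)$ whenever $(i,j)$ is isolated. If $j\neq k$ and $i\neq \ell$, this is (\ref{rel2}). If $j=k$, then (\ref{rel1}) gives commutator $1$ unless $(i,\ell)\in\Delta$, and that is excluded because $(i,j)$ is isolated. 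If $i=\ell$, then $(k,i)\in\Delta$, and the commutator is the inverse of $[x_{k,i}(b),x_{i,j}(a)]$; this is trivial unless $(k,j)\in\Delta$, again excluded by isolation. Finally, if $j=k$ and $i=\ell$ both hold, the product $e_{i,j}e_{j,i}$ would require $(i,i)\in\Delta$, which (A1) forbids, and likewise for $e_{j,i}e_{i,j}$. So $x_{i,j}(a)$ and $x_{j,i}(b)$ commute directly from the ring multiplication.

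\emph{Reverse inclusion.} Let $g=1+x$ be central, with $x=\sum_{(u,v)\in\Omega(x)} r_{u,v}e_{u,v}$ as in (\ref{normalf}). Suppose some $(u,v)\in\Omega(x)$ is not isolated. Then either there is $(v,k)\in\Delta$ with $(u,k)\in\Delta$, or there is $(\ell,u)\in\Delta$ with $(\ell,v)\in\Delta$. In the first case, compute in the ring $R\cdot1\oplus R\Delta$ with $h=x_{v,k}(1)=1+e_{v,k}$. Centrality gives $gh=hg$, which is equivalent to $xe_{v,k}=e_{v,k}x$. Expanding, we get
$$xe_{v,k}=\sum_{(p,v)\in\Omega(x),\,(p,k)\in\Delta} r_{p,v}e_{p,k},$$
and the coefficient of $e_{u,k}$ in this sum is $r_{u,v}\neq0$. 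On the other side, $e_{v,k}x$ is a combination of basis elements $e_{v,q}$. Its $e_{u,k}$-coefficient can be nonzero only if $u=v$, which (A1) rules out. This contradicts $xe_{v,k}=e_{v,k}x$. The second case is symmetric, using $h=1+e_{\ell,u}$ and comparing coefficients of $e_{\ell,v}$. Hence $\Omega(x)\subseteq\Gamma_1$, so $g\in G(\Gamma_1)$.

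The main point needing care is the coefficient comparison. It works because the $e_{p,q}$ form an $R$-basis and multiplication sends basis elements to basis elements or $0$. Since $R$ may be noncommutative, the coefficients should be written on the correct side: $r_{p,v}$ multiplies on the left in $xe_{v,k}$. Each product $e_{p,v}e_{v,k}$ yields a distinct $e_{p,k}$ for distinct $p$, so no cancellation can occur. Working at the ring level avoids Theorem \ref{dec} entirely.
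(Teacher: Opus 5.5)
Your proof is correct and follows essentially the same route as the paper: the reverse inclusion is exactly the paper's coefficient comparison of $g(1+e_{j,k})$ with $(1+e_{j,k})g$ (resp.\ $(1+e_{\ell,i})g$ with $g(1+e_{\ell,i})$), which you phrase as $xe_{v,k}=e_{v,k}x$. Your generator-by-generator check of $G(\Gamma_1)\subseteq Z(G(\Delta))$ supplies the detail the paper dismisses as clear; it can also be seen directly at ring level, since isolation makes $R\Gamma_1$ annihilate $R\Delta$ on both sides.
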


\begin{proof} Clearly that $G(\Gamma_1)$ is contained in the center $G(\Delta)$. Suppose that for some finite $\Omega\subseteq\Delta$,
$$
g=1+\underset{(i,j)\in \Omega}\sum a_{i,j} e_{i,j}
$$
is in the center of $G(\Delta)$. Assume $(i,j)\in \Omega$ is not isolated. 
Then either there is a $(j,k)\in\Delta$ such that $(i,k)\in\Delta$, or there is $(\ell,i)\in \Delta$ such that $(\ell,j)\in\Delta$.
In the first case, when writing $g(1+e_{j,k})$ and $(1+e_{j,k})g$
as $1$ plus an $R$-linear combination of the elements $e_{u,v}$
with $(u,v)\in\Delta$, the coefficients of $e_{i,k}$ are equal to $a_{i,k}+a_{i,j}$ and 
$a_{i,k}$, respectively. Thus $a_{i,j}=0$.
In the second case, when writing $(1+e_{\ell,i})g$ and $g(1+e_{\ell,i})$
as $1$ plus an $R$-linear combination of the elements $e_{u,v}$
with $(u,v)\in \Delta$, the coefficients of $e_{\ell,j}$ are equal to $a_{\ell,j}+a_{i,j}$ and
$e_{\ell,j}$, respectively. Thus~$a_{i,j}=0$.
\end{proof}

Imposing a well-order on $\Delta$, we may now use Lemma \ref{axiom}, Theorem  \ref{quotient}, and Proposition \ref{center}
(together with the fact that the union of an ascending chain of normal subsets of $\Delta$ is normal), to recursively produce the ascending central
series of $G(\Delta)$ (which may be trivial, stabilize at an intermediate point, or reach $G(\Delta)$).



\end{document}